\def\R{\mathbb R}
\def\N{\mathbb N}
\newtheorem{thm}{Theorem}
\newtheorem{lem}[thm]{Lemma}
\newtheorem{Definition}[thm]{Definition}
\newdefinition{rmk}{Remark}
\newproof{pf}{Proof}
\newproof{pot}{Proof of Theorem \ref{FD_Theor}}
\journal{Applied Mathematics and Computation}
\begin{document}

\begin{frontmatter}



\title{A numeric-analytical method for solving the Cauchy problem for ordinary differential equations.}


\author{Volodymyr Makarov\fnref{fn1}}
\ead{makarov@imath.kiev.ua}
\author{Denis Dragunov\corref{cor1}\fnref{fn1}}
\ead{dragunovdenis@gmail.com}
\cortext[cor1]{Corresponding author}
\fntext[fn1]{Institute of Mathematics NAS of Ukraine}

\address{Institute of Mathematics NAS of Ukraine,

Terezhenkivska street 3, Kyiv, Ukraine, Postal Code: 01601

Tel: +380 (44) 234 5150

Fax: +380(44) 235 2010 }

\begin{abstract}
   In the paper we offer a functional-discrete method for solving the Cauchy problem for the first order ordinary differential equations (ODEs). This method (FD-method) is in some sense similar to the Adomian Decomposition Method. But it is shown that for some problems FD-method is convergent whereas ADM is divergent. The results presented in the paper can be easily generalized on the case of systems of ODEs.
\end{abstract}

\begin{keyword}
FD-method \sep Adomian decomposition method \sep generating function method \sep ordinary differential equation\sep Cauchy problem\sep exponential convergence rate.

\MSC  65L05\sep 65L20\sep 65L80
\end{keyword}
\end{frontmatter}



   \section{Introduction.}
 Many scientific papers devoted to the Adomian decomposition method (ADM) have been published during last two decades. For the first time this method was proposed by the American physicist G. Adomian  as a method for solving an operator equations (see \cite{Adomian_1}-\cite{Adomian_3}). The method is based on the specific analytical representation of the exact solution. More precisely, the solution is represented in terms of a rapidly convergent infinite series with easily computable terms (see \cite{Yves_Cherruault}--\cite{Ibrahim_El-Kalla} and the references therein). In spite of the considerable scientific activity, the necessary and sufficient conditions that provide a convergence of these series are unknown.
 Partially, this topic was discussed in \cite{Yves_Cherruault} -- \cite{Hosseini_Nasabzadeh}, where authors study the question about convergence of ADM applied to the nonlinear operator equation $y-N\left(y\right)=f$, where $N\left(\cdot\right)$ is a nonlinear operator in {a} Hilbert space $H$. The paper \cite{N_Himoun_K_Abbaoui_Y_Cherruault} offers sufficient conditions that provide a convergence of ADM applied to the Cauchy problem on a finite segment. {In} \cite{Hashim_Noorani_Ahmad_Bakar_Ismail_Zakaria} authors apply ADM (in a rather modified form) to the Lorenz system. The paper \cite{Inc_Cherruault} is devoted to the application of ADM to the solution of the nonlinear Volterra-Fredholm integro-differential equation.

 The author of \cite{Ibrahim_El-Kalla} {proposed} a modification of the ADM. Using several numerical examples, he has shown that this modified method converges faster than the ordinary one. {Nevertheless,  one crucial fact was overlooked} in his work, {as it turns out,} this modified ADM coincides with the fixed point iteration method (c.f. \cite{Gavrilyuk_Lazurchak_Makarov_Sytnik}).

The idea of ADM is similar to the idea of a functional-discrete method, called FD-method, which was firstly introduced in \cite{Makarov_SLP}. In this paper the FD-method was applied to the solution of the Sturm-Liouville problem and the remarkable convergence results were obtained. Then, in \cite{GKMR} --
\cite{Gavrilyuk_Lazurchak_Makarov_Sytnik}, {FD-method was  successfully applied} to several operator
equations and, in particular, to the boundary value problems.

The essential difference between ADM and FD-method {is expressed by the fact that} the last one has a built-in adjustable parameter, by varying which we can provide the convergence of FD-method {even}, when ADM is found to be divergent.

Let us outline the general idea of FD-method by applying it to the following Cauchy problem:
\begin{equation}\label{zagalna_zadacha}
    \begin{array}{c}
      \mathbb{L}_{r}\left(u\left(x\right)\right)-N\left(x,
    u\left(x\right)\right)u\left(x\right)=\phi\left(x\right),
    x\in\left[x_{0}, +\infty\right), \\[1.2em]
      u\left(x_{0}\right)=u_{0},\frac{d}{dx}u\left(x_{0}\right)=u^{(1)}_{0},\ldots, \frac{d^{r-1}}{dx^{r-1}}u\left(x_{0}\right)=u^{(r-1)}_{0},
    \end{array}
\end{equation}
where $\mathbb{L}_{r}\left(\cdot\right)=\frac{d^{r}}{dx^{r}}\left(\cdot\right)+\sum\limits_{k=1}^{r-1}p_{k}\left(x\right)\frac{d^{\left(r-k\right)}}{dx^{\left(r-k\right)}}\left(\cdot\right)$
is a linear differential operator of the $r-$th order, $r\in
\mathbb{N}, p_{k}\left(x\right)\in C\left(\left[x_{0},
+\infty\right)\right);$
$N\left(x, u\right)$: $\left[x_{0}, +\infty\right)\times \R
\rightarrow \mathbb{R}$ is a given nonlinear function, $\phi\left(x\right)\in C\left(\left[x_{0}, +\infty\right)\right).$  From now on we make the assumption that {$N\left(x, u\right)$} is continuous in $x$ on
$\left[x_{0}, +\infty\right)$ and infinitely differentiable with respect to $u$ on $\R,$ i.e., $N\left(x, u\right)\in C^{0, \infty}_{x, u}\left(\left[x_{0}, +\infty\right)\times \R\right).$

{
The main idea of FD-method is to find an approximation to the exact solution $u\left(x\right)$ of problem (\ref{zagalna_zadacha}) in the form of the finite subsum $\overset{m}{u}\left(x\right)=\sum\limits_{i=0}^{m}u^{\left(i\right)}\left(x\right)$ of the infinite series representation
\begin{equation}\label{ZagalniyRyad}
u\left(x\right)=
\sum_{i=0}^{\infty}u^{\left(i\right)}\left(x\right).
\end{equation}
To define each term of series (\ref{ZagalniyRyad}) we need to introduce a grid:
\begin{equation}\label{grid}
    \begin{array}{c}
      \widehat{\omega}=\left\{x_{0}<x_{1}<x_{2}<\ldots,\quad x_{n}\rightarrow +\infty, \; n\rightarrow +\infty\right\}, \\[1.2em]
      h=\sup\limits_{i\in \N}\left\{h_{i}=x_{i}-x_{i-1}\right\}.
    \end{array}
\end{equation}
We define the functions
$u^{\left(i\right)}\left(x\right) \in
C^{r-1}\left(\left[x_{0}, +\infty\right)\right)$ to be the solutions of the following linear Cauchy problems
\begin{equation}\label{linear_Caucy_problems_1}
    \begin{array}{c}
      \mathbb{L}_{r}\left(u^{(0)}\left(x\right)\right)-N\left(x,\; u^{(0)}\left(x_{i-1}\right)\right)u^{(0)}\left(x\right)=\phi\left(x\right), \\[1.2em]
      u^{(0)}\left(x_{0}\right)=u^{(0)}_{0}, \frac{d}{dx}u\left(x_{0}\right)=u^{(1)}_{0},\ldots, \frac{d^{r-1}}{dx^{r-1}}u\left(x_{0}\right)=u^{(r-1)}_{0}, \\[1.2em]
      x\in\left[x_{i-1},\; x_{i}\right],\; i=1,2,\ldots,
    \end{array}
\end{equation}
\begin{equation}\label{linear_Caucy_problems_2}
    \begin{array}{c}
      \mathbb{L}_{r}\left(u^{(j+1)}\left(x\right)\right)-N\left(x,\;u^{(0)}\left(x_{i-1}\right)\right)u^{(j+1)}\left(x\right)= \\[1.2em]
      =N'\left(x,\;u^{(0)}\left(x_{i-1}\right)\right)u^{(0)}\left(x\right)u^{(j+1)}\left(x_{i-1}\right)+F^{(j+1)}\left(x\right), \\[1.2em]
      u^{(j+1)}\left(x_{0}\right)=0, \frac{d}{dx}u^{(j+1)}\left(x_{0}\right)=0,\ldots, \frac{d^{r-1}}{dx^{r-1}}u^{(j+1)}\left(x_{0}\right)=0, \\[1.2em]
      x\in\left[x_{i-1},\; x_{i}\right], \; i=1,2,\ldots
    \end{array}
\end{equation}
with {the matching conditions}
\begin{equation}\label{matching_conditions}
    \begin{array}{c}
      \left[\frac{d^{k}}{d x^{k}}u^{(j)}\left(x\right)\right]_{x=x_{i}}=\lim\limits_{x\rightarrow x_{i}+0}\frac{d^{k}}{d x^{k}}u^{(j)}\left(x\right)-\lim\limits_{x\rightarrow x_{i}-0}\frac{d^{k}}{d x^{k}}u^{(j)}\left(x\right)=0, \\[1.2em]
      k=0,1,\ldots, r-1;\;i=1,2,\ldots; j=0,1,\ldots,
    \end{array}
\end{equation}

where
\begin{equation}\label{F_term}
    \begin{array}{c}
      F^{(j+1)}\left(x\right)=\sum\limits_{p=1}^{j}A_{j+1-p}\left(N\left(x,\;\left(\cdot\right)\right);\; u^{(0)}\left(x_{i-1}\right), \ldots , u^{(j+1-p)}\left(x_{i-1}\right)\right)u^{(p)}\left(x\right)+ \\[1.2em]
      +\sum\limits_{p=0}^{j}\left[A_{j-p}\left(N\left(x,\;\left(\cdot\right)\right);\; u^{(0)}\left(x\right), u^{(1)}\left(x\right), \ldots, u^{(j-p)}\left(x\right) \right)-\right. \\[1.2em]
      \left.-A_{j-p}\left(N\left(x,\;\left(\cdot\right)\right);\; u^{(0)}\left(x_{i-1}\right), u^{(1)}\left(x_{i-1}\right), \ldots, u^{(j-p)}\left(x_{i-1}\right) \right)\right]u^{(p)}\left(x\right)+ \\[1.2em]
      +A_{j+1}\left(N\left(x,\;\left(\cdot\right)\right);\; u^{(0)}\left(x_{i-1}\right), \ldots, u^{(j)}\left(x_{i-1}\right), 0\right)u^{(0)}\left(x\right),\; j=0,1,\ldots.
    \end{array}
\end{equation}
And
\begin{equation}\label{Ado_poly}
      A_{k}\left(N\left(x,\; \left(\cdot\right)\right);\;u^{(0)}, u^{(1)},\ldots, u^{(k)}\right)=\left.\frac{1}{k!}\frac{d^{k}}{d t^{k}}\left(N\left(x,\;\sum_{i=0}^{\infty}t^{i}u^{(i)}\right)\right)\right|_{t=0}
\end{equation}
{are well-known Adomian's polynomials} (see \cite{Seng_Abbaoui_Cherruault}, \cite{Seng_Abbaoui_Cherruault_1}).

\begin{Definition}\label{defin}
  We say that the FD-method for the Cauchy problem (\ref{zagalna_zadacha}) converges (to the exact solution of problem (\ref{zagalna_zadacha})) on $\left[x_{0}, x_{0}+H\right),\; 0<H\leq+\infty,$ if there exists a real number $\overline{h}>0$ such that for any grid $\omega$ (\ref{grid}) with $h\leq\overline{h},$ series (\ref{ZagalniyRyad}), with the terms computed from (\ref{linear_Caucy_problems_1})--(\ref{F_term}), converges uniformly (to the exact solution of problem (\ref{zagalna_zadacha})) on $\left[x_{0}, x_{0}+H\right).$
\end{Definition}

In the present paper we consider a particular case of the Cauchy problem (\ref{zagalna_zadacha}) when $r=1, \mathbb{L}_{1}\left(\cdot\right)=\frac{d}{dx}\left(\cdot\right),$ more precisely
\begin{equation}\label{OFD_1}
   \frac{d}{dx}u\left(x\right)-N\left(x,\;u\left(x\right)\right)u\left(x\right)=\phi\left(x\right),
\end{equation}
$$u\left(x_{0}\right)=u_{0},\; x \in \left[x_{0}, +\infty\right).$$
The main result of the paper is presented by the following theorem:
                    \begin{thm}\label{FD_Theor}

                    Let the Cauchy problem (\ref{OFD_1}) satisfies the following conditions:
\begin{enumerate}
                        \item $N\left(x,\;u\right)=\sum\limits_{i=0}^{\infty}a_{i}\left(x\right)u^{i},$ where $a_{i}\left(x\right)\in C\left(\left[x_{0},\;+\infty\right)\right).$ And there exists a sequence of real numbers $B_{i}>0,\ldots i=0,1,\ldots$, such that $$\sup\limits_{x\in \left[x_{0},+\infty\right)}\left|a_{i}\left(x\right)\right|\leq B_{i},  \;0\leq B_{i}\in \R,\; i=0,1,\ldots,$$ and the series $\sum\limits_{i=0}^{\infty}B_{i}u^{i}$ is convergent  $\forall u\in\R;$
                        \item $\phi\left(x\right)$ is a continuous and bounded function on $\left[x_{0}, +\infty\right)$, with $$\sup\limits_{x\in \left[x_{0},\;+\infty\right)}\left|\phi\left(x\right)\right|=k<+\infty;$$

                        \item  $\left(N\left(x,\;u\right)u\right)^{\prime}_{u}=N\left(x,\;u\right)+uN^{\prime}_{u}\left(x,\;u\right)< -\alpha,<0,$ $\quad \forall x\in \left[x_{0},+\infty\right),$ $ \forall u\in \mathbb{R}.$
\end{enumerate}
                        Then for any initial condition $u_{0}\in \mathbb{R}$ the solution
                        $u\left(x\right)$ of problem (\ref{OFD_1}) exists on $\left[x_{0}, +\infty \right).$ The FD-method converges on $\left[x_{0}, +\infty \right)$ to the exact solution of problem (\ref{OFD_1}). Moreover, the following error estimations hold true
                        \begin{equation}\label{error_estimations1}
                              \sup\limits_{x\in\left[0, +\infty\right)}\left|u\left(x\right)-\overset{m}{u}\left(x\right)\right|\leq
                         \frac{C}{(1+m)^{1+\varepsilon}} \frac{(h/R)^{m+1}}{1-h/R}\quad \mbox{if} \quad h<R,
                        \end{equation}
                        \begin{equation}\label{error_estimations2}
                              \sup\limits_{x\in\left[0, +\infty\right)}\left|u\left(x\right)-\overset{m}{u}\left(x\right)\right|\leq C\sum\limits_{j=m+1}^{\infty}\frac{1}{\left(j+1\right)^{1+\varepsilon}}\quad \mbox{if}\quad h=R,
                        \end{equation}
                        were $\varepsilon ,\; R, C$ are positive real numbers that depend on the input data of the problem only.
                    \end{thm}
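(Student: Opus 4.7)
The plan is to build the proof in four layered steps: a priori bounds from dissipativity, control of the zeroth approximation, an inductive estimate for the remaining iterates, and a majorant-series argument that delivers the final convergence rate. I would first exploit condition~3. For any two solutions $u,v$ of (\ref{OFD_1}), the identity
$N(x,u)u-N(x,v)v=\left(\int_0^1 (Nu)'_u(x,v+t(u-v))\,dt\right)(u-v)$
and $(Nu)'_u<-\alpha$ give $\tfrac{d}{dx}(u-v)^2\le-2\alpha(u-v)^2$, so the flow is exponentially contractive. Together with condition~2, and with $N(x,0)=a_0(x)$ bounded from condition~1, this traps any solution in a bounded interval, producing a uniform a~priori bound $\sup_{x\ge x_0}|u(x)|\le M$ and global existence on $[x_0,+\infty)$.

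For the zeroth approximation, on each $[x_{i-1},x_i]$ the function $u^{(0)}$ satisfies a \emph{linear} ODE with the frozen coefficient $g_i(x)=N(x,u^{(0)}(x_{i-1}))$. Variation of parameters combined with the matching conditions (\ref{matching_conditions}) gives an explicit representation, and the dissipativity of condition~3 transfers to a discrete contraction from interval to interval, yielding a uniform bound $\sup_x|u^{(0)}(x)|\le M_0$ together with a Lipschitz estimate $|u^{(0)}(x)-u^{(0)}(x_{i-1})|\le Ch$ on $[x_{i-1},x_i]$. For the higher iterates I would induct on~$j$. The equation for $u^{(j+1)}$ is again linear with the same frozen coefficient, so variation of parameters plus matching yields $\|u^{(j+1)}\|\le C\,h\,\|F^{(j+1)}\|$, where $C$ absorbs exponential factors (bounded because the interval length is $h$ and $N$ is controlled by $\sum B_iu^i$). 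Each term in (\ref{F_term}) is either an Adomian polynomial times some $u^{(p)}$, or a difference of Adomian polynomials evaluated at $x$ versus $x_{i-1}$; the latter pick up an additional factor of $h$ from the Lipschitz property established recursively. Majorizing the Adomian polynomials by the coefficients of the convergent series $\sum B_iu^i$ from condition~1 leads to a recursion of the form
\begin{equation*}
\|u^{(j+1)}\|\le h\sum_{p=0}^{j+1} c_{j+1,p}\,\|u^{(p)}\|,
\end{equation*}
whose coefficients $c_{j+1,p}$ are read off from the power-series expansion of the majorant.

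I would then compare this recursion with a scalar generating-function problem. Setting $\widetilde{u}(t)=\sum_j\|u^{(j)}\|\,t^j$, the recursion becomes a fixed-point equation in $t$ whose dominating solution is analytic on a disk of radius $R>0$ determined by the majorant of $N$. A Cauchy-type coefficient estimate, refined by an extra summability margin built into the majorant, yields $\|u^{(j)}\|\le C(h/R)^j(j+1)^{-1-\varepsilon}$; summing the geometric tail produces (\ref{error_estimations1}) for $h<R$ and (\ref{error_estimations2}) for $h=R$. Identifying $\sum_i u^{(i)}$ with the exact solution is then a direct check: on each interval the equations (\ref{linear_Caucy_problems_1}), (\ref{linear_Caucy_problems_2}), (\ref{F_term}) sum telescopically via the definition (\ref{Ado_poly}) of the Adomian polynomials to recover (\ref{OFD_1}); the matching conditions guarantee continuity at grid points; and uniqueness from step~1 closes the argument.

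The hardest piece will be the handling of $F^{(j+1)}$. Its three constituent sums must simultaneously (i) produce the extra $h$-factor that powers the geometric convergence, (ii) admit a coefficient-by-coefficient comparison with the majorant series of condition~1, and (iii) retain enough structure to yield the polynomial enhancement $(j+1)^{-1-\varepsilon}$ in the final bound. This combined bookkeeping, rather than any single sharp inequality, is where most of the technical labor resides.
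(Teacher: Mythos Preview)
Your outline matches the paper's architecture --- dissipativity for a priori bounds, variation of parameters for the iterates, Adomian-polynomial bookkeeping, and a generating-function majorant --- but it glosses over the step that carries most of the technical weight.

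The claim that ``variation of parameters plus matching yields $\|u^{(j+1)}\|\le C\,h\,\|F^{(j+1)}\|$'' is where the gap sits. On a single interval $[x_{i-1},x_i]$, variation of parameters gives
\[
|u^{(j+1)}(x)|\le (\text{transfer factor})\cdot|u^{(j+1)}(x_{i-1})|+C\,h_i\,\|F^{(j+1)}\|_{0,[x_{i-1},x_i]},
\]
and the initial-value contribution does \emph{not} vanish. To convert this into a bound on $[x_0,+\infty)$ you must control propagation across \emph{infinitely many} intervals, i.e.\ show that the transfer factor is $\le 1-c\,h_i$ for some $c>0$. You invoke precisely this mechanism for $u^{(0)}$ (``discrete contraction from interval to interval'') but then drop it for $j\ge 0$. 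In the paper the contraction for the higher iterates is obtained by rewriting (\ref{NewDir_11}) so that the effective linear coefficient becomes $q_i(x)=N(x,u^{(0)}(x_{i-1}))+u^{(0)}(x_{i-1})N'_u(x,u^{(0)}(x_{i-1}))$, to which condition~3 applies directly; this is the content of the chain (\ref{NewDir_1})--(\ref{NewDir_9}), and even after all of it one obtains only $\|u^{(j+1)}\|_{1}\le\sigma\,\|F^{(j+1)}\|_{0}$ with \emph{no} factor of $h$ in $\sigma$.

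Two related corrections. First, the $h^{j+1}$ scaling does not come from variation of parameters at all; it emerges from the weighted homogeneity of the Adomian polynomials after the substitution $\|u^{(k)}\|=h^k\nu_k$ (this is why the paper passes to $\nu_j=\|u^{(j)}\|_{1}/h^j$ before writing the majorant recursion). Second, the ``difference of Adomian polynomials at $x$ versus $x_{i-1}$'' term in $F^{(j+1)}$ only produces an $h$-factor through a mean-value argument (Lemma~\ref{lema_1}) that requires control of $\frac{d}{dx}u^{(k)}$; hence the induction must be run in the $\|\cdot\|_{1}$-norm, not $\|\cdot\|_{0}$, and the derivative bound has to be re-established at every step from (\ref{NewDir_2})--(\ref{NewDir_3}). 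Your Lipschitz remark for $u^{(0)}$ hints at this, but the proposal does not carry it through the induction.
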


\section{Justification of the FD-method algorithm for solving the Cauchy problem on the infinite interval.}
To begin with, let us introduce some useful notations.
For each real-valued function $u\left(x\right)$ defined on $\left<a,\; b\right>, \; a<b\leq+\infty$  we denote
$$\left\|u\left(x\right)\right\|_{0,\infty,\;\left< a,\;b\right>}=\sup_{x\in \left<a,\;b\right>}\left|u\left(x\right)\right|,$$ and $\forall u\left(x\right)\in C\left(\left<a, b\right>\right)$ we denote $$\left\|u\left(x\right)\right\|_{1,\infty,\;\left< a,\;b\right>}=$$
$$=\max\left\{\left\|u\left(x\right)\right\|_{0,\infty,\left< a,\;b\right>},\;\left\|\frac{d}{d x}u\left(x\right)\right\|_{0,\infty,\left< a,\;b\right>}\right\}.$$
For the future convenience we would like to define such notations
\begin{equation}\label{notation_1}
    \left\|u\left(x\right)\right\|_{i,\;\left< a,\;b\right>}\overset{def}{=}\left\|u\left(x\right)\right\|_{i,\infty,\;\left< a,\;b\right>},\;\left\|u\left(x\right)\right\|_{i}\overset{def}{=}\left\|u\left(x\right)\right\|_{i,\infty,\;\left[ x_{0},\;+\infty\right)},\;i=1,2.
\end{equation}
Before proceed to the proof of theorem \ref{FD_Theor} we need to prove several auxiliary statements, presented below.

                   \begin{lem}\label{lema_pro_obmezh}
                      Let conditions 2) and 3) of theorem \ref{FD_Theor} be fulfilled, and
                      $N\left(x,\;u\right)\in C^{0,1}_{x,u}\left(\left[x_{0},\;+\infty\right)\times\R\right).$ Then for any $h_{i}>0,\; i=1,2,\ldots$ the solution $u^{(0)}\left(x\right)$ of the following Cauchy problem with piecewise constant argument (see. \cite{Akhmet_PC_argument})
                     \begin{equation}\label{dacha1}
                        \frac{d}{dx}u^{(0)}\left(x\right)-N\left(x,\;u^{(0)}\left(x_{i-1}\right)\right)u^{(0)}\left(x\right)=\phi\left(x\right), \; x\in\left[x_{i-1}, \; x_{i}\right], \; x_{i}=x_{i-1}+h_{i},
                     \end{equation}
                     \begin{equation}\label{dacha2}
                        u^{(0)}\left(x_{0}\right)=u_{0}\in \R, \;\left[u^{(0)}\left(x\right)\right]_{x=x_{i}}=0,\;i=1,2,\ldots
                        \end{equation}
                      exists on $\left[x_{0}, +\infty\right)$ and $\left|u^{(0)}\left(x\right)\right|\leq \mu=\max\left\{\left|u_{0}\right|,\; \frac{k}{\alpha}\right\},\;\forall x\in\left[x_{0},\;+\infty\right).$
                   \end{lem}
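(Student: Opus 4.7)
The plan is to treat the Cauchy problem (\ref{dacha1})--(\ref{dacha2}) inductively, one subinterval $[x_{i-1}, x_i]$ at a time. The crucial observation is that once $u^{(0)}(x_{i-1})$ has been determined from the previous step, the coefficient $N(x, u^{(0)}(x_{i-1}))$ depends only on $x$, so (\ref{dacha1}) reduces on this interval to a \emph{linear} scalar ODE with continuous coefficients. Existence and uniqueness of a classical solution on $[x_{i-1}, x_i]$ is then immediate, and the matching condition $[u^{(0)}(x)]_{x=x_i} = 0$ provides the initial value for the next interval; iterating in $i$ yields a solution on all of $[x_0, +\infty)$. What remains is the pointwise bound by $\mu$.

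The only place condition~3 enters, and the step I expect to be the least transparent, is in extracting from $(N(x,u)u)'_u < -\alpha$ the uniform one-sided bound
\[
N(x,u) \le -\alpha \qquad \forall\,x \in [x_0, +\infty),\ \forall\,u \in \R.
\]
Setting $g(x,u) := N(x,u)u$, one has $g(x,0) = 0$ and $\partial_u g < -\alpha$. For $u > 0$, integrating from $0$ to $u$ gives $u N(x,u) = g(x,u) < -\alpha u$, hence $N(x,u) < -\alpha$; for $u < 0$, the analogous integration followed by division by the negative quantity $u$ yields the same inequality; at $u=0$ the bound is read off directly from condition~3.

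On $[x_{i-1}, x_i]$, denoting $\nu_i(x) := N(x, u^{(0)}(x_{i-1}))$ and applying variation of parameters, I would write
\[
u^{(0)}(x) = u^{(0)}(x_{i-1})\,\exp\!\Bigl(\int_{x_{i-1}}^{x} \nu_i(s)\,ds\Bigr) + \int_{x_{i-1}}^{x} \exp\!\Bigl(\int_{t}^{x} \nu_i(s)\,ds\Bigr)\phi(t)\,dt.
\]
Using $\nu_i(\cdot) \le -\alpha$ to bound each exponential by $e^{-\alpha(x-x_{i-1})}$ and $e^{-\alpha(x-t)}$ respectively, and $|\phi|\le k$, a direct calculation produces
\[
|u^{(0)}(x)| \le |u^{(0)}(x_{i-1})|\,e^{-\alpha(x-x_{i-1})} + \frac{k}{\alpha}\Bigl(1 - e^{-\alpha(x-x_{i-1})}\Bigr),
\]
which is a convex combination of $|u^{(0)}(x_{i-1})|$ and $k/\alpha$, hence is at most $\max\{|u^{(0)}(x_{i-1})|,\,k/\alpha\}$.

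A straightforward induction on $i$, starting from $|u^{(0)}(x_0)| = |u_0| \le \mu$, then propagates the bound $|u^{(0)}(x)| \le \mu$ across every subinterval and hence to all of $[x_0, +\infty)$. I anticipate no additional obstacle: the hypotheses on $N$ and $\phi$ reduce the iteration to a monotone argument, so the lemma ultimately rests on the sign extraction performed in the second paragraph.
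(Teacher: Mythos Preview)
Your proof is correct. Both you and the paper begin by extracting the uniform bound $N(x,u)\le -\alpha$ from condition~3 (the paper via the mean value theorem applied to $N(x,u)u - N(x,0)\cdot 0$, you by integrating $\partial_u\bigl(N(x,u)u\bigr)$ directly; these are equivalent). The approaches diverge in the boundedness step. You estimate the variation-of-parameters formula directly, arriving at the explicit convex-combination inequality
\[
|u^{(0)}(x)| \le |u^{(0)}(x_{i-1})|\,e^{-\alpha(x-x_{i-1})} + \tfrac{k}{\alpha}\bigl(1 - e^{-\alpha(x-x_{i-1})}\bigr),
\]
and then induct on $i$. The paper instead multiplies the ODE by $u^{(0)}(x)$ to obtain the energy-type differential inequality
\[
\tfrac{1}{2}\tfrac{d}{dx}\bigl(u^{(0)}(x)\bigr)^{2} \le -\alpha\bigl(u^{(0)}(x)\bigr)^{2} + k\,|u^{(0)}(x)|,
\]
and argues by contradiction that $|u^{(0)}|$ cannot exit the interval $[-\mu,\mu]$. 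Your route is more elementary and yields a quantitative decay estimate as a byproduct; the paper's Lyapunov-style argument is less explicit here but is the template that generalizes more readily to systems and to nonlinear comparison principles.
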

                  \begin{proof}
                   Since equation (\ref{dacha1}) is linear on each segment $\left[x_{i-1},\;
                   x_{i}\right]$ and $N\left(x,\;u\right)$ is defined on the whole real line as a function of $u$, the solution of problem (\ref{dacha1}), (\ref{dacha2}) on the interval $\left[x_{0},\; +\infty\right)$ exists obviously:
                    \begin{equation}\label{dacha3}
                            \begin{array}{c}
                            u^{(0)}\left(x\right)=\exp\left\{\int\limits_{x_{i-1}}^{x}N\left(\xi,u^{(0)}\left(x_{i-1}\right)\right)d\xi\right\}u^{(0)}\left(x_{i-1}\right)+\\
                            +\int\limits_{x_{i-1}}^{x}\exp\left\{\int\limits_{\xi}^{x}N\left(\tau,u^{(0)}\left(x_{i-1}\right)\right)d\tau\right\}\phi\left(\xi\right)d\xi,\quad x\in\left[x_{i-1},\;x_{i}\right].
                            \end{array}
                     \end{equation}
                    Thus, it remains only to prove that this solution is bounded by some positive constant $\mu.$ The conditions of this lemma together with the mean value theorem provide us with the following inequality
                   $$N\left(x,\;u\right)u^{2}=u\left[N\left(x,\;u\right)u-N\left(x,\;0\right)\cdot 0\right]=$$
                   $$=u\left[\left(N_{u}^{\prime}\left(x,\;\tilde{u}\right)\tilde{u}+N\left(x,\;\tilde{u}\right)\right)u\right]\leq -\alpha u^{2},\; \tilde{u}\in\left[0,\; u\right].$$
                   If we, additionally, take into account that the function $N\left(x,\;u\right),$ is continuous, we will obtain the inequality $N\left(x,\;u\right)\leq -\alpha,\; \forall u\in \R.$
                   Then multiplying both sides of equation (\ref{dacha1}) (for $i=1$) by $u^{(0)}\left(x\right)$ and using the above estimate for
                   $N\left(x,\;u\right)$ we get the following
                     $$\frac{1}{2}\frac{d}{d x}\left(u^{(0)}\left(x\right)\right)^{2}=N\left(x, u^{(0)}\left(x_{0}\right)\right)\left(u^{(0)}\left(x\right)\right)^{2}+\phi\left(x\right)u^{(0)}\left(x\right),$$
                     \begin{equation}\label{dacha1_1}
                        \frac{1}{2}\frac{d}{d x}\left(u^{(0)}\left(x\right)\right)^{2}\leq -\alpha\left(u^{(0)}\left(x\right)\right)^{2}+k \left|u^{(0)}\left(x\right)\right|.
                     \end{equation}
                      Inequality (\ref{dacha1_1}) states that solution $u^{(0)}\left(x\right)$ of problem (\ref{dacha1}), (\ref{dacha2}) on $\left[x_{0}, \;x_{1}\right]$ can't leave a segment $\left[-\mu,\;\mu\right].$

                     As a matter of fact, if we suppose that there exist $x_{\ast}, x^{\ast}\in \left[x_{0}, x_{1}\right]$ such that $x_{\ast}<x^{\ast}$ and
                     $$\left|u^{(0)}\left(x_{\ast}\right)\right|=\mu\geq\frac{k}{\alpha},$$
                     \begin{equation}\label{_pt_1}
                     \left|u^{(0)}\left(x\right)\right|>\mu,\; \forall x\in\left(x_{\ast}, x^{\ast}\right],
                     \end{equation}
                     then  we immediately obtain from (\ref{dacha1_1})
                     $$\left.\frac{d}{d x}\left(\left(u^{(0)}\left(x\right)\right)^{2}\right)\right|_{x=x^{\ast}}<0.$$
                     This contradicts our assumption \eqref{_pt_1}.
                     So, $u^{(0)}\left(x\right)\in \left[-\mu,\;\mu\right],\; \forall x\in\left[x_{0},\; x_{1}\right].$
                     In a similar way we can prove that the solution of the Cauchy problem  (\ref{dacha1}), (\ref{dacha2}) on $\left[x_{1}, x_{2}\right]$
                     belongs to the segment $\left[-\mu,\;\mu\right]$ too and so on. This completes the proof of the lemma.
                    \end{proof}
The following lemma can be considered as a generalization of lemma 2.1 from \cite{Gavrilyuk_Lazurchak_Makarov_Sytnik}.
\begin{lem}\label{lema_1}
Let $N\left(x,u\right)\in
C^{0,\infty}_{x,u}\left(\left[x_{0},\;+\infty\right)\times\R\right)$ and
$u^{(j)}\left(x\right)\in C^{1}\left(\left[x_{0},\; +\infty\right)\right),$
$ j=0,1,2,\ldots.$
Furthermore, let there exists a function
$\widetilde{N}\left(u\right)\in C^{\infty}\left(\R\right)$ such that
$$\sup_{x\in\left[x_{0},\;+\infty\right)}\left|\frac{\partial^{p} }{\partial u^{p}}N\left(x,u\right)\right|\leq \frac{d^{p}}{d u^{p}}\widetilde{N}\left(\left|u\right|\right)\; \forall u\in\R,\; \forall p\in \N\bigcup \left\{0\right\}.$$
Then the following inequalities hold true
$$\left\|A_{k}\left(N\left(x,\left(\cdot\right)\right);\;u^{(0)}\left(x\right),\ldots, u^{(k)}\left(x\right)\right)-\right.$$
$$\left.-A_{k}\left(N\left(x,\left(\cdot\right)\right);\;u^{(0)}\left(x_{i-1}\right),\ldots, u^{(k)}\left(x_{i-1}\right)\right)\right\|_{0,\left[x_{i-1},\; x_{i} \right]}\leq$$
$$\leq h_{i}A_{k}\left(\widetilde{N}'\left(u\right)u;\;\left\|u^{(0)}\left(x\right)\right\|_{1,\left[x_{i-1},\;x_{i}\right]},\ldots, \left\|u^{(k)}\left(x\right)\right\|_{1,\left[x_{i-1},\;x_{i}\right]}\right)\leq$$
$$\leq h_{i}A_{k}\left(\widetilde{N}'\left(u\right)u;\;\left\|u^{(0)}\left(x\right)\right\|_{1,\left[x_{0},+\infty\right)},\ldots, \left\|u^{(k)}\left(x\right)\right\|_{1,\left[x_{0},+\infty\right)}\right),$$
$$\forall x\in \left[x_{i-1}, x_{i}\right], \; k=0,1,2,\ldots, i=1,2,\ldots.$$
\end{lem}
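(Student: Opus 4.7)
The plan is to combine the fundamental theorem of calculus with the generating-function characterization (\ref{Ado_poly}) of Adomian polynomials. Fix $x \in [x_{i-1}, x_i]$ and set
\[
\Phi_k(\xi) = A_k\bigl(N(x, \cdot);\, u^{(0)}(\xi), \ldots, u^{(k)}(\xi)\bigr),
\]
so that the expression whose sup-norm is to be bounded equals $\Phi_k(x) - \Phi_k(x_{i-1}) = \int_{x_{i-1}}^{x} \Phi_k'(\xi)\, d\xi$. The factor $h_i$ on the right-hand side will emerge from the length of the integration interval once I bound $|\Phi_k'(\xi)|$ uniformly on $[x_{i-1}, x_i]$.

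The crucial identity comes from the generating function. Writing $S(t, \xi) = \sum_{i\geq 0} t^i u^{(i)}(\xi)$, definition (\ref{Ado_poly}) is equivalent to $\sum_k t^k \Phi_k(\xi) = N(x, S(t, \xi))$. Differentiating in $\xi$ at fixed $(x, t)$ gives $N_u'(x, S(t, \xi)) \cdot \partial_\xi S(t, \xi)$ on the right, and matching the coefficient of $t^k$ via the Cauchy product rule yields
\[
\Phi_k'(\xi) = \sum_{j=0}^{k} A_{k-j}\bigl(N_u'(x, \cdot);\, u^{(0)}(\xi), \ldots, u^{(k-j)}(\xi)\bigr) \, \frac{d u^{(j)}}{d\xi}(\xi).
\]
Invoking the majorant hypothesis termwise in the partition-sum representation of each $A_{k-j}$ (with $\widetilde{N}^{(p)}(|u|)$ dominating $|\partial^p N/\partial u^p|$ and, being a non-negative function on $[0,+\infty)$, monotone non-decreasing there) and using $|u^{(l)}(\xi)|,\, |du^{(j)}(\xi)/d\xi| \leq \|u^{(l)}\|_{1,[x_{i-1},x_i]}$, I arrive at
\[
|\Phi_k'(\xi)| \leq \sum_{j=0}^{k} A_{k-j}\bigl(\widetilde{N}';\, \|u^{(0)}\|_{1,[x_{i-1},x_i]}, \ldots, \|u^{(k-j)}\|_{1,[x_{i-1},x_i]}\bigr)\, \|u^{(j)}\|_{1,[x_{i-1},x_i]}.
\]

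Finally, I would recognize this Cauchy-product sum as a single Adomian polynomial by reversing the generating-function identity for the product function $g(u) = \widetilde{N}'(u)\,u$: with $V(t) = \sum_j t^j \|u^{(j)}\|_{1,[x_{i-1},x_i]}$ one has $\widetilde{N}'(V(t))\, V(t) = \bigl(\sum_k t^k A_k(\widetilde{N}';\ldots)\bigr)\, V(t)$, whose $t^k$ coefficient is simultaneously the sum above and $A_k\bigl(\widetilde{N}'(u)u;\, \|u^{(0)}\|_{1,[x_{i-1},x_i]}, \ldots, \|u^{(k)}\|_{1,[x_{i-1},x_i]}\bigr)$. Integration over $[x_{i-1}, x]$ then yields the middle inequality of the lemma, and the last inequality is immediate from monotonicity of $A_k(\widetilde{N}'(u)u;\,\cdot)$ in each non-negative argument combined with $\|u^{(l)}\|_{1,[x_{i-1},x_i]} \leq \|u^{(l)}\|_{1,[x_0,+\infty)}$. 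The most delicate point I expect to handle is precisely the termwise majorization: it must be made explicit that $A_k$, viewed as a polynomial in the non-leading arguments with coefficients equal to derivatives of the outer function at the leading argument, is sign-preserving and monotone under enlargement of any non-negative argument, so that passing from pointwise values to sup-norms can only increase the magnitude.
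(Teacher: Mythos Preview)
Your argument is correct and follows the same underlying strategy as the paper: differentiate $A_k$ through its dependence on $\xi$ via the $u^{(j)}(\xi)$, bound the derivative by passing to the majorant $\widetilde{N}$, and recognize the resulting expression as $A_k(\widetilde{N}'(u)u;\,\cdot)$. The execution differs in bookkeeping. The paper works throughout with the explicit partition-sum representation of $A_k$, applies the mean value theorem to each summand, and then uses the Leibniz identity $(\widetilde{N}'(u)u)^{(\alpha_1)}=\widetilde{N}^{(\alpha_1+1)}(u)u+\alpha_1\widetilde{N}^{(\alpha_1)}(u)$ to collapse the product-rule terms back into a single Adomian polynomial. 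You instead stay at the generating-function level: the chain rule on $N(x,S(t,\xi))$ gives directly the Cauchy-product decomposition $\Phi_k'=\sum_j A_{k-j}(N_u';\ldots)\,(u^{(j)})'$, and the recombination into $A_k(\widetilde{N}'(u)u;\ldots)$ is again a generating-function identity rather than a Leibniz computation. Your route is somewhat cleaner and avoids the explicit partition indices; the paper's route is more hands-on and makes the monotonicity in each argument visible term by term. One small wording issue: non-negativity of $\widetilde{N}^{(p)}$ alone does not give monotonicity; what you actually need (and have) is that \emph{all} derivatives $\widetilde{N}^{(p)}$ are non-negative on $[0,\infty)$ by hypothesis, so each $\widetilde{N}^{(p)}$ is non-decreasing because $\widetilde{N}^{(p+1)}\geq 0$.
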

\begin{proof} Throughout the proof we use the notation
$$N^{(p)}\left(u\right)=\frac{\partial^{p} }{\partial u^{p}}N\left(x,u\right),\quad u^{(j)}=u^{(j)}\left(x\right),\quad u^{(j)}_{i-1}=u^{(j)}\left(x_{i-1}\right).$$
Let us fix any arbitrary $x\in\left[x_{i-1}, x_{i}\right].$ It is well known that the following representation for the Adomian's polynomial holds
(see \cite{Abbaoui_Pujol_Cherruault_Himoun_Grimalt},
\cite{Seng_Abbaoui_Cherruault}):
$$A_{k}\left(N\left(\cdot\right);\; u^{(0)},\ldots, u^{(k)}\right)=$$
$$=\sum_{\substack{\alpha_{1}+\ldots+\alpha_{k}=k \\ \alpha_{1}\geq \alpha_{2}\geq\ldots\geq\alpha_{k},\; \alpha_{i}\in \N\bigcup \left\{0\right\},}}N^{\left(\alpha_{1}\right)}\left(u^{(0)}\right)\frac{\left[u^{(1)}\right]^{\left(\alpha_{1}-\alpha_{2}\right)}}{\left(\alpha_{1}-\alpha_{2}\right)!}\ldots
\frac{\left[u^{(k-1)}\right]^{\left(\alpha_{k-1}-\alpha_{k}\right)}}{\left(\alpha_{k-1}-\alpha_{k}\right)!}\frac{\left[u^{(k)}\right]^{\alpha_{k}}}{\left(\alpha_{k}\right)!}.$$
Therefore
$$\left\|A_{k}\left(N\left(\cdot\right);\; u^{(0)},\ldots, u^{(k)}\right)-A_{k}\left(N\left(\cdot\right);\; u^{(0)}_{i-1},\ldots, u^{(k)}_{i-1}\right)\right\|_{0,\left[x_{i-1},\; x_{i}\right]}\leq$$
$$\leq\left\|\sum_{\alpha_{1}+\ldots+\alpha_{k}=k}\left\{N^{\left(\alpha_{1}\right)}\left(u^{(0)}\right)\frac{\left[u^{(1)}\right]^{\left(\alpha_{1}-\alpha_{2}\right)}}{\left(\alpha_{1}-\alpha_{2}\right)!}\ldots
\frac{\left[u^{(k)}\right]^{\alpha_{k}}}{\left(\alpha_{k}\right)!}-\right.\right.$$
$$-\left.\left.N^{\left(\alpha_{1}\right)}\left(u^{(0)}_{i-1}\right)\frac{\left[u^{(1)}_{i-1}\right]^{\left(\alpha_{1}-\alpha_{2}\right)}}{\left(\alpha_{1}-\alpha_{2}\right)!}\ldots
\frac{\left[u^{(k)}_{i-1}\right]^{\alpha_{k}}}{\left(\alpha_{k}\right)!}\right\}\right\|_{0,\left[x_{i-1},
\;x_{i}\right]}=$$
$$=\left\|h^{\ast}\sum_{\alpha_{1}+\ldots+\alpha_{k}=k}\left\{N^{\left(\alpha_{1}+1\right)}\left(\widetilde{u}^{(0)}\right)\left.\frac{d u^{(0)}\left(x\right)}{dx}\right|_{x=\widetilde{x}}\frac{\left[\widetilde{u}^{(1)}\right]^{\left(\alpha_{1}-\alpha_{2}\right)}}{\left(\alpha_{1}-\alpha_{2}\right)!}\ldots
\frac{\left[\widetilde{u}^{(k)}\right]^{\alpha_{k}}}{\left(\alpha_{k}\right)!}+\right.\right.$$
$$\left.\left.+\ldots+N^{\left(\alpha_{1}\right)}\left(\widetilde{u}^{(0)}\right)\frac{\left[\widetilde{u}^{(1)}\right]^{\left(\alpha_{1}-\alpha_{2}\right)}}{\left(\alpha_{1}-\alpha_{2}\right)!}\ldots
\frac{\left[\widetilde{u}^{(k)}\right]^{\alpha_{k}-1}}{\left(\alpha_{k}-1\right)!}\left.\frac{d
u^{(k)}\left(x\right)}{dx}\right|_{x=\widetilde{x}}\right\}\right\|_{0,\left[x_{i-1}, \;x_{i}\right]}\leq$$

$$\leq h_{i}\sum_{\alpha_{1}+\ldots+\alpha_{k}=k}\frac{\left\|u^{(1)}\right\|_{1,\left[x_{i-1},\;x_{i}\right]}^{\alpha_{1}-\alpha_{2}}}{\left(\alpha_{1}-\alpha_{2}\right)!}\ldots \frac{\left\|u^{(k-1)}\right\|_{1,\left[x_{i-1},\;x_{i}\right]}^{\alpha_{k-1}-\alpha_{k}}}{\left(\alpha_{k-1}-\alpha_{k}\right)!}\frac{\left\|u^{(k)}\right\|_{1,\left[x_{i-1},\;x_{i}\right]}^{\alpha_{k}}}{\left(\alpha_{k}\right)!}\times$$
$$\times \left\{\left\|N^{\left(\alpha_{1}+1\right)}\left(u^{(0)}\right)u^{(0)}\right\|_{0,\left[x_{i-1},\; x_{i}\right]}+\alpha_{1}\left\|N^{\left(\alpha_{1}\right)}\left(u^{(0)}\right)\right\|_{0,\left[x_{i-1},\;x_{i}\right]}\right\}\leq$$

$$\leq h_{i}\sum_{\alpha_{1}+\ldots+\alpha_{k}=k}\frac{\left\|u^{(1)}\right\|_{1,\left[x_{i-1},\;x_{i}\right]}^{\alpha_{1}-\alpha_{2}}}{\left(\alpha_{1}-\alpha_{2}\right)!}\ldots \frac{\left\|u^{(k-1)}\right\|_{1,\left[x_{i-1},\;x_{i}\right]}^{\alpha_{k-1}-\alpha_{k}}}{\left(\alpha_{k-1}-\alpha_{k}\right)!}\frac{\left\|u^{(k)}\right\|_{1,\left[x_{i-1},\;x_{i}\right]}^{\alpha_{k}}}{\left(\alpha_{k}\right)!}\times$$
$$\times \left\{\widetilde{N}^{\left(\alpha_{1}+1\right)}\left(\left\|u^{(0)}\right\|_{1,\left[x_{i-1},\; x_{i}\right]}\right)\left\|u^{(0)}\right\|_{1,\left[x_{i-1},\; x_{i}\right]}+\alpha_{1}\widetilde{N}^{\left(\alpha_{1}\right)}\left(\left\|u^{(0)}\right\|_{1,\left[x_{i-1},\; x_{i}\right]}\right)\right\}=$$

$$= h_{i}A_{k}\left(\widetilde{N}'\left(u\right)u;\;\left\|u^{(0)}\left(x\right)\right\|_{1,\left[x_{i-1},\;x_{i}\right]},\ldots, \left\|u^{(k)}\left(x\right)\right\|_{1,\left[x_{i-1},\;x_{i}\right]}\right)\leq$$
$$\leq h_{i}A_{k}\left(\widetilde{N}'\left(u\right)u;\;\left\|u^{(0)}\left(x\right)\right\|_{1,\left[x_{0},\;+\infty\right)},\ldots, \left\|u^{(k)}\left(x\right)\right\|_{1,\left[x_{0},\;+\infty\right)}\right),$$
where $\widetilde{u}^{(j)}=u^{(j)}\left(\widetilde{x}\right),\; \widetilde{x}\in\left[x_{i-1}, x_{i}\right],\; h^{\ast}=x-x_{i-1},$
And this is precisely the assertion of the lemma.
\end{proof}

The following lemma can be considered as a generalization of lemma 2.2 from \cite{Gavrilyuk_Lazurchak_Makarov_Sytnik}.
\begin{lem}\label{lema_2}
For any scalar function $\widetilde{N}\left(u\right)\in C^{\infty}\left(\R\right)$ the following equality holds true
  $$A_{j+1}\left(\widetilde{N}\left(x\right);\;V_{0}, V_{1},\ldots, V_{j}, 0\right)=$$
  $$=\frac{1}{\left(j+1\right)!}\left\{\frac{d^{j+1}}{d z^{j+1}}\left(\widetilde{N}\left(f\left(z\right)\right)-\left(f\left(z\right)-V_{0}\right)\widetilde{N}'\left(V_{0}\right)\right)\right\}_{z=0},$$
  where  $j=0,1,\ldots$, $f\left(z\right)=\sum\limits_{i=0}^{\infty}z^{i}V_{i}.$
\end{lem}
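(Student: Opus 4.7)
The plan is to deduce the identity from two observations about the Adomian polynomials, both of which follow directly from the definition (\ref{Ado_poly}). First I would set $f(z) = \sum_{i=0}^{\infty} z^{i} V_{i}$; then (\ref{Ado_poly}) is precisely the statement that
\begin{equation}\label{Lemma_proposal_gen_func}
\widetilde{N}(f(z)) = \sum_{k=0}^{\infty} z^{k} A_{k}\left(\widetilde{N};\, V_{0}, \ldots, V_{k}\right),
\end{equation}
so $A_{k}(\widetilde{N};\, V_{0},\ldots,V_{k}) = \frac{1}{k!}\frac{d^{k}}{dz^{k}} \widetilde{N}(f(z))\big|_{z=0}$ is the $k$-th Taylor coefficient of $\widetilde{N}(f(z))$ at $z=0$. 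This is the natural object to match with the right-hand side of the claim.

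The main combinatorial step is the linearization
$$A_{j+1}\left(\widetilde{N};\, V_{0}, \ldots, V_{j}, V_{j+1}\right) - A_{j+1}\left(\widetilde{N};\, V_{0}, \ldots, V_{j}, 0\right) = \widetilde{N}'(V_{0})\, V_{j+1}.$$
I would argue this from the explicit multi-index expansion of the Adomian polynomial already recalled in the proof of Lemma \ref{lema_1}: the variable $V_{j+1}$ enters that sum solely through the factor $V_{j+1}^{\alpha_{j+1}}/\alpha_{j+1}!$, so only multi-indices with $\alpha_{j+1} \geq 1$ can contribute. The monotonicity $\alpha_{1} \geq \ldots \geq \alpha_{j+1}$ then forces $\alpha_{i} \geq 1$ for every $i$, and joined to $\alpha_{1} + \ldots + \alpha_{j+1} = j+1$ this leaves only the all-ones tuple $\alpha_{1} = \ldots = \alpha_{j+1} = 1$. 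That single multi-index contributes $\widetilde{N}^{(1)}(V_{0})\, V_{j+1}$ (since all differences $\alpha_{i} - \alpha_{i+1}$ vanish), which is the claimed increment.

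To finish I would observe that $f(z) - V_{0} = \sum_{i \geq 1} z^{i} V_{i}$, hence
$$\frac{1}{(j+1)!}\frac{d^{j+1}}{dz^{j+1}}\bigl(f(z) - V_{0}\bigr)\Big|_{z=0} = V_{j+1}.$$
Combined with (\ref{Lemma_proposal_gen_func}), the right-hand side of the lemma equals $A_{j+1}(\widetilde{N};\, V_{0}, \ldots, V_{j+1}) - \widetilde{N}'(V_{0})\, V_{j+1}$, which by the linearization above is exactly $A_{j+1}(\widetilde{N};\, V_{0}, \ldots, V_{j}, 0)$. I do not anticipate any real obstacle: once the generating-function viewpoint of (\ref{Lemma_proposal_gen_func}) is in place, the proof reduces to a short, transparent calculation, and the only slightly delicate ingredient is the counting argument isolating the all-ones multi-index as the unique carrier of $V_{j+1}$.
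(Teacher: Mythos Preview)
Your argument is correct: the generating-function identity \eqref{Lemma_proposal_gen_func} reduces the right-hand side to $A_{j+1}(\widetilde{N};V_0,\ldots,V_{j+1})-\widetilde{N}'(V_0)V_{j+1}$, and your multi-index count (only the all-ones tuple carries $V_{j+1}$) cleanly yields the linearization that identifies this with $A_{j+1}(\widetilde{N};V_0,\ldots,V_j,0)$. The paper itself supplies no proof at all---it simply declares the lemma ``trivial''---so there is nothing to compare against; you have written out precisely the short computation that justifies that word.
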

The proof of lemma \ref{lema_2} is trivial.

\begin{proof}{\it (Of theorem \ref{FD_Theor})}
                        Let the conditions of theorem \ref{FD_Theor} be fulfilled. It is easy to seen that for any initial condition $u_{0}\in\R,$
                        the solution of the Cauchy problem (\ref{OFD_1}) exists and is unique on $\left[0, +\infty\right).$ Moreover, it belongs to the segment $\left[-\mu,\;\mu\right],\; \mu=\max\left\{\left|u_{0}\right|,\;\frac{k}{\alpha}\right\}$ (see. \cite{Demidovich}, p.278).
                        So, we are going to prove that this solution can be found by the FD-method.

                        Let us fix any arbitrary infinite grid
                        \begin{equation}\label{Sitka}
                            \widehat{\omega}=\left\{x_{i}=x_{i-1}+h_{i}, \;h_{i}>0,\;i=1,2,\ldots;\; x_{i}\rightarrow +\infty,\; i\rightarrow +\infty\right\},
                        \end{equation}
                        $$h=\sup\limits_{i\in \N}h_{i}$$
                        and embed the Cauchy problem (\ref{OFD_1}) into the more general problem
                        $$\frac{d}{dx}u\left(x,t\right)-N\left(x,u\left(x_{i-1}, t\right)\right)u\left(x, t\right)-$$
                        \begin{equation}\label{OFD_4}
                        -t\left\{N\left(x,u\left(x, t\right)\right)-N\left(x,u\left(x_{i-1},
                        t\right)\right)\right\}u\left(x,t\right)=\phi\left(x\right),\;x\in \left[x_{i-1}, x_{i}\right],
                        \end{equation}
                        $$\left[u\left(x,t\right)\right]_{x=x_{i}}=0,\; i=1,2,\ldots,\;
                        u\left(x_{0},t\right)=u_{0},\; \forall t \in \left[0,1\right].$$
                        Apparently, if we set $t=1$ then the solution of
                        (\ref{OFD_4}) coincides with the solution of problem
                        (\ref{OFD_1}) :
                        $$u\left(x,1\right)=u\left(x\right).$$
                        If we set $t=0,$ on the other hand,  we will get the base problem
                        \begin{equation}\label{OFD_2}
                        \begin{array}{c}
                          \frac{d}{d
                            x}u^{(0)}\left(x\right)-N\left(x,u^{(0)}\left(x_{i-1}\right)\right)u^{(0)}\left(x\right)=\phi\left(x\right),
                            \; x\in\left[x_{i-1},\;x_{i}\right], \; i=1,2,\ldots, \\ [1.2em]
                          u^{(0)}\left(x_{0}\right)=u_{0},\;\left[u^{(0)}\left(x\right)\right]_{x=x_{i}}=0,
                            i=1,2,\ldots,
                        \end{array}
                        \end{equation}
                        which is analogous to problem (\ref{linear_Caucy_problems_1}).
                        Then, we assume that the solution of problem
                        (\ref{OFD_4}) can be found in the form of series
                        \begin{equation}\label{OFD_5}
                            u\left(x,t\right)=\sum_{j=0}^{\infty}t^{j}u^{(j)}\left(x\right),
                             \forall t \in \left[0, 1\right], \forall x \in \left[x_{0},\;+\infty\right),
                        \end{equation}
                        \begin{equation}\label{OFD_5'}
                            \frac{d}{dx}u\left(x,
                            t\right)=\sum_{j=0}^{\infty}t^{j}\frac{d}{dx}u^{(j)}\left(x\right),
                            \forall t\in \left[0, 1\right], \forall
                            x\in \left[x_{i-1},\;x_{i}\right],\;
                            i=1,2,\ldots,
                        \end{equation}
                        where coefficients $u^{(j)}\left(x\right)$ depend on $x$ only.
                        By substituting (\ref{OFD_5}) and (\ref{OFD_5'}) into (\ref{OFD_4}) and comparing the coefficients in front of the powers of $t,$ we obtain the following recurrence sequence of linear problems for $u^{(j+1)}\left(x\right)$ (with a piecewise constant coeficient):
                        \begin{equation}\label{NewDir_11}
                        \begin{array}{c}
                          \frac{d}{dx}u^{(j+1)}\left(x\right)-N\left(x,u^{(0)}\left(x_{i-1}\right)\right)u^{(j+1)}\left(x\right)= \\[1.2em]
                          =N^{\prime}_{u}\left(x,u^{(0)}\left(x_{i-1}\right)\right)u^{(0)}\left(x\right)u^{(j+1)}\left(x_{i-1}\right)+F^{j+1}\left(x\right), \\[1.2em]
                          \quad x\in \left[x_{i-1},x_{i}\right],
                            i=1,2,\ldots,
                        \end{array}
                        \end{equation}
                        where
                        $$\begin{array}{c}
                            F^{(j+1)}\left(x\right)=\sum\limits_{p=1}^{j}A_{j+1-p}\left(N\left(x,\left(\cdot\right)\right); u^{(0)}\left(x_{i-1}\right), \ldots , u^{(j+1-p)}\left(x_{i-1}\right)\right)u^{(p)}\left(x\right)+ \\[1.2em]
                            +\sum\limits_{p=0}^{j}\left[A_{j-p}\left(N\left(x,\left(\cdot\right)\right); u^{(0)}\left(x\right), u^{(1)}\left(x\right), \ldots, u^{(j-p)}\left(x\right) \right)-\right. \\[1.2em]
                            \left.-A_{j-p}\left(N\left(x,\left(\cdot\right)\right); u^{(0)}\left(x_{i-1}\right), u^{(1)}\left(x_{i-1}\right), \ldots, u^{(j-p)}\left(x_{i-1}\right) \right)\right]u^{(p)}\left(x\right)+ \\[1.2em]
                            +A_{j+1}\left(N\left(x,\left(\cdot\right)\right); u^{(0)}\left(x_{i-1}\right), u^{(1)}\left(x_{i-1}\right), \ldots, u^{(j)}\left(x_{i-1}\right), 0\right)u^{(0)}\left(x\right), \\[1.2em]
                            \left[u^{(j+1)}\left(x\right)\right]_{x=x_{i}}=0,\; i=1,2,\ldots,\;
                        u^{(j+1)}\left(x_{0}\right)=0,\; j=0,1,2,\ldots.
                          \end{array}
                        $$ It is easy to see, that problems (\ref{NewDir_11}) are analogous to problems (\ref{linear_Caucy_problems_2}).
Let us express equation (\ref{NewDir_11}) in the equivalent form
\begin{equation}\label{NewDir_7}
\begin{array}{c}
  \frac{d u^{(j+1)}\left(x\right)}{d x}-q_{i}\left(x\right)u^{(j+1)}\left(x\right)= \\[1.2em]
  =N^{\prime}_{u}\left(x,u^{(0)}\left(x_{i-1}\right)\right)\left[u^{(j+1)}\left(x_{i-1}\right)\int\limits_{x_{i-1}}^{x}\frac{d}{d\xi}u^{(0)}\left(\xi\right)d \xi+\right. \\[1.2em]
  \left.+u^{(0)}\left(x_{i-1}\right)\int\limits_{x_{i-1}}^{x}\frac{d}{d\xi}u^{(j+1)}\left(\xi\right)d \xi\right]+F^{(j+1)}\left(x\right),\;x\in\left[x_{i-1},\; x_{i}\right], \\[1.2em]
  \left[u^{(j+1)}\left(x\right)\right]_{x=x_{i}}=0,\; i=1,2,\ldots,\; u^{(j+1)}\left(x_{0}\right)=0,
\end{array}
\end{equation}
where
$$q_{i}\left(x\right)=N\left(x,u^{(0)}\left(x_{i-1}\right)\right)+N^{\prime}_{u}\left(x,u^{(0)}\left(x_{i-1}\right)\right)u^{(0)}\left(x_{i-1}\right).$$
The continuous solution of the Cauchy problem (\ref{NewDir_7}) can be represented in the following form
\begin{equation}\label{NewDir_1}
\begin{array}{c}
  u^{(j+1)}\left(x\right)=\left[\exp\left\{\int\limits_{x_{i-1}}^{x}q_{i}\left(\xi\right)d \xi\right\}+\right. \\[1.2em]
  +\int\limits_{x_{i-1}}^{x}\exp\left\{\int\limits_{\xi}^{x}q_{i}\left(\tau\right)d \tau\right\}N^{\prime}_{u}\left(\xi,u^{(0)}\left(x_{i-1}\right)\right)\times \\[1.2em]
  \left.\times \int\limits_{x_{i-1}}^{\xi}\frac{d}{d\eta}u^{(0)}\left(\eta\right)d\eta d\xi\right]u^{(j+1)}\left(x_{i-1}\right)+ \\[1.2em]
  +u^{(0)}\left(x_{i-1}\right)\int\limits_{x_{i-1}}^{x}\exp\left\{\int\limits_{\xi}^{x}q_{i}\left(\tau\right)d \tau\right\}N^{\prime}_{u}\left(\xi,u^{(0)}\left(x_{i-1}\right)\right)\int\limits_{x_{i-1}}^{\xi}\frac{d}{d \eta}u^{(j+1)}\left(\eta\right)d\eta d\xi+\\[1.2em]
  +\int\limits_{x_{i-1}}^{x}\exp\left\{\int\limits_{\xi}^{x}q_{i}\left(\tau\right)d \tau\right\}F^{(j+1)}\left(\xi\right)d \xi,\;x\in\left[x_{i-1},\;x_{i}\right].
\end{array}
\end{equation}
On the other hand, from \eqref{NewDir_11} we will obtain
\begin{equation}\label{Correction_1}
\begin{array}{c}
  u^{(j+1)}\left(x\right)=\left[\exp\left\{\int\limits_{x_{i-1}}^{x}n_{i}\left(\xi\right)d\xi\right\}+\right.\\[1.2em]
  \left.+\int\limits_{x_{i-1}}^{x}\exp\left\{\int\limits_{\xi}^{x}n_{i}\left(\eta\right)d\eta\right\} N^{\prime}_{u}\left(\xi, u^{(0)}\left(x_{i-1}\right)\right)u^{(0)}\left(\xi\right)d \xi\right]u^{(j+1)}\left(x_{i-1}\right)+ \\[1.2em]
  +\int\limits_{x_{i-1}}^{x}\exp\left\{\int\limits_{\xi}^{x}n_{i}\left(\eta\right)d \eta\right\}F^{(j+1)}\left(\xi\right)d\xi,
\end{array}
\end{equation}
where
$$n_{i}\left(x\right)=N\left(x, u^{(0)}\left(x_{i-1}\right)\right)\quad x\in\left[x_{i-1}, x_{i}\right].$$

If we differentiate (\ref{Correction_1}) with respect to $x$ we will obtain
$$$$
$$$$
\begin{equation}\label{NewDir_2}
\begin{array}{c}
  \frac{d}{d x}u^{(j+1)}\left(x\right)=\left[n_{i}\left(x\right)\exp\left\{\int\limits_{x_{i-1}}^{x}n_{i}\left(\xi\right)d \xi\right\}+N^{\prime}_{u}\left(x,u^{(0)}\left(x_{i-1}\right)\right)u^{(0)}\left(x\right)+\right. \\[1.2em]
  \left.+n_{i}\left(x\right)\int\limits_{x_{i-1}}^{x}\exp\left\{\int\limits_{\xi}^{x}n_{i}\left(\tau\right)d \tau\right\}N^{\prime}_{u}\left(\xi,u^{(0)}\left(x_{i-1}\right)\right)u^{(0)}\left(\xi\right) d\xi\right]u^{(j+1)}\left(x_{i-1}\right)+ \\[1.2em]
  + n_{i}\left(x\right)\int\limits_{x_{i-1}}^{x}\exp\left\{\int\limits_{\xi}^{x}n_{i}\left(\tau\right)d \tau\right\}F^{(j+1)}\left(\xi\right)d\xi+F^{(j+1)}\left(x\right),\;x\in\left[x_{i-1},\;x_{i}\right]. \\[1.2em]
\end{array}
\end{equation}
Equation \eqref{NewDir_2} could be expressed in the another form
\begin{equation}\label{1}
\begin{array}{c}
  \frac{d}{d x}u^{(j+1)}\left(x\right)=p_{i}\left(x\right)u^{(j+1)}\left(x_{i-1}\right)+ \\[1.2em]
  +n_{i}\left(x\right)\int\limits_{x_{i-1}}^{x}\exp\left\{\int\limits_{\xi}^{x}n_{i}\left(\tau\right)d
\tau\right\}F^{(j+1)}\left(\xi\right)d\xi+F^{(j+1)}\left(x\right),
\end{array}
\end{equation}
where
\begin{equation}\label{Correction_2}
    \begin{array}{c}
      p_{i}\left(x\right)=n_{i}\left(x\right)\exp\left\{\int\limits_{x_{i-1}}^{x}n_{i}\left(\xi\right)d \xi\right\}+N^{\prime}_{u}\left(x,u^{(0)}\left(x_{i-1}\right)\right)u^{(0)}\left(x\right)+ \\
      +n_{i}\left(x\right)\int\limits_{x_{i-1}}^{x}\exp\left\{\int\limits_{\xi}^{x}n_{i}\left(\tau\right)d \tau\right\}N^{\prime}_{u}\left(\xi,u^{(0)}\left(x_{i-1}\right)\right)u^{(0)}\left(\xi\right) d\xi,\;x\in \left[x_{i-1},\;x_{i}\right].
    \end{array}
\end{equation}
From \eqref{1}, \eqref{Correction_2} and lemma \ref{lema_pro_obmezh}  we obtain the following estimates
\begin{equation}\label{NewDir_3}
\begin{array}{c}
  \left\|\frac{d}{d x}u^{(j+1)}\left(x\right)\right\|_{0, \left[x_{i-1},\;x_{i}\right]}\leq \left\|p_{i}\left(x\right)\right\|_{0, \left[x_{i-1},\;x_{i}\right]}\left|u^{(j+1)}\left(x_{i-1}\right)\right|+ \\[1.2em]
  +\left(1+Nhe^{Nh}\right)\left\|F^{(j+1)}\left(x\right)\right\|_{0, \left[x_{i-1},\; x_{i}\right]},\quad N=\max\limits_{\substack{\left|u\right|\leq \mu \\ x\in\left[x_{0}, +\infty\right)}}\left|N\left(x, u\right)\right|
\end{array}
\end{equation}
for $ i=1,2,\ldots.$
For abbreviation, we denote  (using the result of lemma \ref{lema_pro_obmezh})
$$B=\max\limits_{\substack{\left|u\right|\leq \mu \\ x\in\left[x_{0}, +\infty\right) }}\left|N^{\prime}_{u}\left(x,u\right)\right|\left(\left|N\left(x, u\right)\right|\mu+k\right)<+\infty,$$
$$C=\max\limits_{\substack{\left|u\right|\leq \mu\\ x\in\left[x_{0}, +\infty\right) }}\left|N^{\prime}_{u}\left(x,u\right)\right|\mu<+\infty.$$
Then, from (\ref{NewDir_1}) we obtain the estimates
$$\left\|u^{(j+1)}\left(x\right)\right\|_{0,\left[x_{i-1},\;x_{i}\right]}\leq\left[1+\frac{B h_{i}^{2}}{2}\right]\left|u^{(j+1)}\left(x_{i-1}\right)\right|+$$
$$+\frac{C h_{i}^{2}}{2}\left\|\frac{d}{d x}u^{(j+1)}\left(x\right)\right\|_{0,\left[x_{i-1},\; x_{i}\right]}+h_{i}\left\|F^{(j+1)}\left(x\right)\right\|_{0,\left[x_{0}, \;+\infty\right)},$$

$$\left|u^{(j+1)}\left(x_{i}\right)\right|\leq\left[e^{-\alpha h_{i}}+\frac{B h_{i}^{2}}{2}\right]u^{(j+1)}\left(x_{i-1}\right)+$$
$$+\frac{Ch_{i}^{2}}{2}\left\|\frac{d}{d x}u^{(j+1)}\left(x\right)\right\|_{0,\left[x_{i-1},\; x_{i}\right]}+h_{i}\left\|F^{(j+1)}\left(x\right)\right\|_{0, \left[x_{0}, \;+\infty\right)},\; i=1,2,\ldots.$$
Combining the last two inequalities, the result of lemma \ref{lema_pro_obmezh}, and  (\ref{NewDir_3}), we get
\begin{equation}\label{NewDir_8}
    \left\|u^{(j+1)}\left(x_{i}\right)\right\|_{0,\left[x_{i-1}, \; x_{i}\right]}\leq\left[1+\overline{B} h_{i}^{2}\right]\left|u^{(j+1)}\left(x_{i-1}\right)\right|+h_{i}\overline{D}\left\|F^{(j+1)}\left(x\right)\right\|_{0, \left[x_{0}, \;+\infty\right)},
\end{equation}
\begin{equation}\label{NewDir_4}
    \left|u^{(j+1)}\left(x_{i}\right)\right|\leq\left[e^{-\alpha h_{i}}+\overline{B} h_{i}^{2}\right]\left|u^{(j+1)}\left(x_{i-1}\right)\right|+h_{i}\overline{D}\left\|F^{(j+1)}\left(x\right)\right\|_{0, \left[x_{0}, \;+\infty\right)},
\end{equation}
were
$$\overline{B}=\frac{B}{2}+\frac{C \overline{p}}{2},\;
\overline{D}=1+h_{i}\frac{C}{2}\left(1+Nhe^{Nh}\right),$$
$$\overline{p}= Ne^{Nh}+C+Nhe^{Nh}C=C+Ne^{Nh}\left(1+hC\right)\geq \left\|p\left(x\right)\right\|_{0, \left[x_{0}, +\infty\right)}.$$

Without loss of generality, we can assume that $\left\|F^{j+1}\left(x\right)\right\|_{0, \left[x_{0}, +\infty\right)}>0.$ Since that we denote
$$y_{i}=\left|u^{(j+1)}\left(x_{i}\right)\right|\left\|F^{(j+1)}\left(x\right)\right\|_{0,\left[x_{0}, \;+\infty\right)}^{-1}$$
and rewrite \eqref{NewDir_4} in the following form
\begin{equation}\label{NewDir_Poslid}
    y_{i}\leq \left[e^{-\alpha h_{i}}+\overline{B} h_{i}^{2}\right]y_{i-1}+h_{i}\overline{D}, y_{0}=0, i=1,2,\ldots.
\end{equation}
It is easy to see that for all $h_{i}>0$
$$e^{-\alpha h_{i}}+\overline{B} h_{i}^{2}\leq 1-\alpha h_{i}+h_{i}^{2}\left(\overline{B}+\frac{\alpha^{2}}{2}\right)= 1-h_{i}\left(\alpha - h_{i}\left(\overline{B}+\frac{\alpha^{2}}{2}\right)\right).$$
Thus the inequality
\begin{equation}\label{NewDir_5}
    h_{i}\leq h\leq \frac{\alpha}{2\overline{B}+\alpha^{2}},\; i=1,2,\ldots,
\end{equation}
implies
$$e^{-\alpha h_{i}}+\overline{B} h_{i}^{2}\leq 1-\frac{h_{i}\alpha}{2}.$$
The last inequality, under conditions
(\ref{NewDir_5}), guarantee that the recurrence sequence
\begin{equation}\label{NewDir_6}
  Y_{i}=\left(1-h_{i}\frac{\alpha}{2}\right)Y_{i-1}+h_{i}\overline{D},\; i=1,2,\ldots,\; Y_{0}=0,
\end{equation}
is a majorant for the sequence $y_{i}$ (\ref{NewDir_Poslid}).

It is easy to seen that \eqref{NewDir_6} can be reduced to the form
$$Z_{i}=\left(1-\overline{h}_{i}\right)Z_{i-1}, \; i=1,2,\ldots,\; Z_{0}=1,$$
where
$$\overline{h}_{i}=h_{i}\frac{\alpha}{2},\; Z_{i}=1-\overline{Y}_{i}\; \overline{Y}_{i}=\frac{\alpha Y_{i}}{2 \overline{D}},\; i=1,2,\ldots,$$
hence that
$$Z_{i}=\prod_{p=1}^{i}\left(1-\overline{h}_{p}\right),\; i=1,2,\ldots,\; Z_{0}=1.$$
We conclude from the last expression that the inequalities
\begin{equation}\label{NewDir_9}
 0<h_{i}\leq\mu_{1}=\min\left\{\frac{4}{\alpha},\;\frac{\alpha}{2\overline{B}+\alpha^{2}}\right\},\; \forall i\in\N
\end{equation}
implies the estimates
$$y_{i}\leq\overline{Y}_{i}\leq \frac{4}{\alpha}\overline{D}, \; \forall i\in \N.$$
From now on we orient the grid $\widehat{\omega}$ \eqref{Sitka} by requirement that \eqref{NewDir_9} is satisfied.
Thus the inequalities (\ref{NewDir_3}) and  (\ref{NewDir_8}) yield
$$\left\|u^{(j+1)}\left(x\right)\right\|_{0,\left[x_{i-1}, \; x_{i}\right]}\leq \left(\left(1+\overline{B}\mu_{1}^{2}\right)\frac{2}{\alpha}\overline{D}+\mu_{1}\overline{D}\right)\left\|F^{(j+1)}\left(x\right)\right\|_{0,\left[x_{0},+\infty\right)},$$
$$\left\|\frac{d}{d x}u^{(j+1)}\left(x\right)\right\|_{0,\left[x_{i-1}, \; x_{i}\right]}\leq \left(\overline{p}\overline{D}\frac{2}{\alpha}+\frac{Q}{\alpha}+1\right)\exp\left(\frac{C}{\alpha}\right)\left\|F^{(j+1)}\left(x\right)\right\|_{0,\left[x_{0},+\infty\right)}.$$
Combining the last two inequalities, we obtain the estimate
\begin{equation}\label{NewDir_10}
   \left\|u^{(j+1)}\left(x\right)\right\|_{1,\left[x_{0}, +\infty\right)}\leq \sigma \left\|F^{(j+1)}\left(x\right)\right\|_{0,\left[x_{0}, +\infty\right)},
\end{equation}
where
$$\sigma=\max\left\{\left(\left(1+\overline{B}\mu_{1}^{2}\right)\frac{2}{\alpha}\overline{D}+\mu_{1}\overline{D}\right),\;\left(\overline{p}\overline{D}\frac{2}{\alpha}+\frac{Q}{\alpha}+1\right)\exp\left(\frac{C}{\alpha}\right)\right\}.$$

To make the following estimations we need to use the results of lemmas \ref{lema_1} and \ref{lema_2}. The requirements of these lemmas are satisfied if we set (in the notations of lemma \ref{lema_1})
\begin{equation}\label{formula_for_n_tilde}
\widetilde{N}\left(u\right)=\sum_{i=0}^{\infty}\left\|a_{i}\left(x\right)\right\|_{0}u^{i}\leq \sum_{i=0}^{\infty}B_{i}u^{i},\; \forall u\in\R.
\end{equation}

(\ref{NewDir_10}) is equivalent to
$$\left\|u^{(j+1)}\left(x\right)\right\|_{1}\leq $$
$$\leq\sigma\left\{\sum_{p=1}^{j}A_{j+1-p}\left(\widetilde{N}\left(u\right);\; \left\|u^{(0)}\left(x\right)\right\|_{1}, \ldots, \left\|u^{(j+1-p)}\left(x\right)\right\|_{1}\right)
\left\|u^{(p)}\left(x\right)\right\|_{1}\right.+$$
$$+h\sum_{p=0}^{j}A_{j-p}\left(\widetilde{N}'\left(u\right)u;\;\left\|u^{(0)}\left(x\right)\right\|_{1},\ldots, \left\|u^{(j-p)}\left(x\right)\right\|_{1}\right)\left\|u^{(p)}\left(x\right)\right\|_{1}+$$
\begin{equation}\label{NewDir_15}
    +\frac{\left\|u^{(0)}\left(x\right)\right\|_{1}}{\left(j+1\right)!}\left[\frac{d^{j+1}}{d z^{j+1}}\left(\widetilde{N}\left(\sum_{s=0}^{\infty}z^{s}\left\|u^{(s)}\left(x\right)\right\|_{1}\right)\right.-\right.
\end{equation}
$$\left.\left.\left.-\sum_{s=1}^{\infty}z^{s}\left\|u^{(s)}\left(x\right)\right\|_{1}\widetilde{N}'\left(\left\|u^{(0)}\left(x\right)\right\|_{1}\right)\right)\right]_{z=0}\right\},\; j=0,1,2,\ldots.$$
Denote
\begin{equation}\label{NewDir_16}
   \nu_{j}=\frac{\left\|u^{\left(j\right)}\left(x\right)\right\|_{1}}{h^{j}},
   j=0,1,\ldots.
\end{equation}
Let us  define a sequence $\left\{V_{i}\right\}_{i=1}^{\infty}$ by the following recursive formula
   \begin{equation}\label{NewDir_17}
     V_{j+1}=\sigma\left\{\sum_{p=1}^{j}A_{j+1-p}\left(\widetilde{N}\left(u\right);V_{0},\ldots,V_{j+1-p}\right)V_{p}+\right.
   \end{equation}
   $$+\sum_{p=0}^{j}A_{j-p}\left(\widetilde{N}'\left(u\right)u;V_{0},\ldots,V_{j-p}\right)V_{p}+$$
   $$\left.+\frac{V_{0}}{\left(j+1\right)!}\frac{d^{j+1}}{d z^{j+1}}\left(\widetilde{N}\left(\sum_{s=0}^{\infty}z^{s}V_{s}\right)\right)_{z=0}-V_{j+1}V_{0}\widetilde{N}'\left(V_{0}\right)\right\},\;j=0,1,\ldots.$$
   or, in more convenient form,
   \begin{equation}\label{NewDir_18}
   V_{j+1}=\frac{\sigma}{1+\sigma V_{0}\widetilde{N}'\left(V_{0}\right)}\left\{\sum_{p=0}^{j}A_{j+1-p}\left(\widetilde{N}\left(u\right); V_{0},\ldots, V_{j+1-p}\right)V_{p}+\right.
   \end{equation}
   $$\left.+\sum_{p=0}^{j}A_{j-p}\left(\widetilde{N}'\left(u\right)u;V_{0},\ldots, V_{j-p}\right)V_{p}\right\},$$
   where $V_{0}=\mu.$

    It is easy to seen that $\nu_{i}\leq V_{i},$ $\forall i\in \N\bigcup\left\{0\right\}.$

    We are now in a position to prove that for $h$ \eqref{Sitka} sufficiently small the assumptions (\ref{OFD_5}) and (\ref{OFD_5'}) hold. In order to do that, it is sufficient to show that the function \begin{equation}\label{NewDir_19_1}
                        g\left(z\right)=\sum_{j=0}^{\infty}z^{j}V_{j}
    \end{equation} has  a nonempty open domain.

                  From (\ref{NewDir_18}) we obtain
                    \begin{equation}\label{NewDir_19}
                        g\left(z\right)-V_{0}=\frac{\sigma}{1+\sigma V_{0}\widetilde{N}'\left(V_{0}\right)}\left\{g\left(z\right)\left[\widetilde{N}\left(g\left(z\right)\right)-\widetilde{N}\left(V_{0}\right)\right]+zg^{2}\left(z\right)\widetilde{N}'\left(g\left(z\right)\right)\right\}.
                    \end{equation}
                    Let us express $z$ from (\ref{NewDir_19}) as a function of $g:$
                           $$ z\left(g\right)=\frac{1}{g^{2}
                           \widetilde{N}'\left(g\right)}\left\{\frac{1}{\Sigma}\left(g-V_{0}\right)-\left(\widetilde{N}\left(g\right)-\widetilde{N}\left(V_{0}\right)\right)g\right\},$$
                    \begin{equation}\label{NewDir_20}
                            V_{0}\leq g,\; \Sigma=\frac{\sigma}{1+\sigma V_{0}\widetilde{N}'\left(V_{0}\right)}.
                    \end{equation}
                           Evidently, the function $z\left(g\right)$ (\ref{NewDir_20}) is defined and continuously differentiable in some open neighbourhood of the point $g=V_{0}.$ To prove the existence of the inverse function $g=g\left(z\right)$ defined in some open neighbourhood of the point $z=0$ it is sufficient to show that $z'\left(V_{0}\right)>0.$ The last fact immediately follows from formula (\ref{NewDir_20}):
                    $$z'\left(V_{0}\right)=\lim_{g \rightarrow V_{0}}\frac{z\left(g\right)-z\left(V_{0}\right)}{g-V_{0}}=$$
                    \begin{equation}\label{NewDir_21}
                        =\lim_{g \rightarrow V_{0}}\frac{1}{V_{0}^{2}\widetilde{N}'\left(V_{0}\right)}\left(\frac{1}{\Sigma}-\frac{\widetilde{N}\left(g\right)-\widetilde{N}\left(V_{0}\right)}{g-V_{0}}g\right)=
                    \end{equation}
                    $$=\frac{1}{\sigma V_{0}^{2}\widetilde{N}'\left(V_{0}\right)}>0.$$
                    From \eqref{formula_for_n_tilde} it follows that  $\widetilde{N}\left(u\right)\rightarrow +\infty$ as $u\rightarrow +\infty.$
                    Hence, from equation (\ref{NewDir_20}) we have:
                    $$\lim_{g \rightarrow +\infty}z\left(g\right)\leq 0.$$
                     The last inequality together with (\ref{NewDir_21}) provides the existence of a point $g_{max}$ such that  $g_{max}>V_{0},\;z'\left(g_{max}\right)=0$ and $\forall g\in\left(V_{0},\;g_{max}\right)\; z'\left(g\right)>0.$  $R=z_{max}=z\left(g_{max}\right)$ is the radius of convergence of the power series (\ref{NewDir_19_1}). In other words, $$R^{j}V_{j}\leq C\frac{1}{\left(j+1\right)^{1+\varepsilon}},$$
                    for sufficiently small $\varepsilon>0,$ where $C$ is some constant independent of $h_{i},\; i\in\N.$

                    Returning to notation \eqref{NewDir_16}, we obtain
                    \begin{equation}\label{NewDir_22}
                        \left\|u^{(j)}\left(x\right)\right\|_{1,\left[x_{0},\;+\infty\right)}\leq \frac{C}{\left(j+1\right)^{1+\varepsilon}}\left(\frac{h}{R}\right)^{j}, j=0,1,\ldots.
                    \end{equation}
                    This estimate gives us the following sufficient condition for series (\ref{NewDir_19_1}) to be convergent:
                    $$\frac{h}{R}\leq 1.$$

                    Finally, we have proved that for $h>0$ sufficiently small (to be exact, $h<\min\left\{\mu_{1},\; R\right\}$)
                     assumptions (\ref{OFD_5}) and (\ref{OFD_5'}) are fulfilled. Therefore, it is remain to prove that the sum of the uniformly convergent series (\ref{ZagalniyRyad}) is a solution to problem (\ref{OFD_1}). To do that we need to add up the base problem (\ref{OFD_2}) and equations (\ref{NewDir_11}) for
                     $j=0,1,\ldots.$ As a result we obtain
                     \begin{equation}\label{NewDir_23}
                       \sum_{j=0}^{\infty}\frac{d}{d x}u^{(j)}\left(x\right)-\sum_{j=0}^{\infty}\sum_{p=0}^{j}A_{j-p}\left(N\left(x,\left(\cdot\right)\right);\;u^{(0)}\left(x\right),\ldots, u^{(j-p)}\left(x\right)\right)u^{(p)}\left(x\right)=\phi\left(x\right),
                     \end{equation}
$x\in\bigcup\limits_{i=1}^{\infty}\left(x_{i-1},\; x_{i}\right).$ Using the theorem about the composition of two power series (see \cite{Fihtenholts}, p. 485), it is easy to prove the equality
$$N\left(x,\sum_{i=0}^{\infty}t^{i}u^{(i)}\left(x\right)\right)\sum_{i=0}^{\infty}t^{i}u^{(i)}\left(x\right)=$$
 $$=\sum_{j=0}^{\infty}t^{j}\sum_{p=0}^{j}A_{j-p}\left(N\left(x,\left(\cdot\right)\right), u^{(0)}\left(x\right),\ldots, u^{(j-p)}\left(x\right)\right)u
^{(p)}\left(x\right),$$
 $\forall t\in \left[0,\;1\right],\;\forall
x\in \left[x_{0},\;+\infty\right).$ Thus, taking into account the uniformly convergence of the series $\sum\limits_{i=0}^{\infty}\frac{d}{d
x}u^{(i)}\left(x\right)$ {on each interval $\left(x_{i-1}, x_{i}\right),\; i=1,2,\ldots$}, we can rewrite (\ref{NewDir_23}) in the following form
\begin{equation}\label{NewDir_24}
    \frac{d}{d
    x}\tilde{u}\left(x\right)-N\left(\tilde{u}\left(x\right)\right)\tilde{u}\left(x\right)=\phi\left(x\right), \;
    \forall x\in\bigcup_{i=1}^{\infty}\left(x_{i-1},\; x_{i}\right),\;\tilde{u}\left(x\right)=\sum\limits_{j=0}^{\infty}u^{(j)}\left(x\right),
\end{equation}
Recall that $\tilde{u}\left(x_{0}\right)=u_{0}.$ Then the continuity of $\tilde{u}\left(x\right),$ the existence and uniqueness of the solution $u\left(x\right)$ of Cauchy problem $(\ref{OFD_1})$ on $\left[x_{0},\;+\infty\right)$ along with (\ref{NewDir_24}) imply our final goal
$\tilde{u}\left(x\right)\equiv u\left(x\right),\; \forall x\in\left[x_{0},\;+\infty\right).$ This means that the FD-method for the Cauchy problem \eqref{OFD_1} converges to the exact solution of the problem in the sense of definition \ref{defin}.

We leave it to the reader to verify the error estimates \eqref{error_estimations1}, \eqref{error_estimations2}. They could be easily derived from \eqref{NewDir_22}.
\end{proof}

\section{Examples.}
{\bf Example 1.} As an example let us consider the following Cauchy problem
\begin{equation}\label{Pr_1}
    \frac{d}{d x}u\left(x\right)=-u^{3}\left(x\right)-u\left(x\right)+\cos\left(x\right)+\sin\left(x\right)+\sin^{3}\left(x\right), \; u\left(0\right)=0.
\end{equation}
It easy to seen that the exact solution of problem  (\ref{Pr_1}) is
 $u^{\ast}\left(x\right)=\sin(x).$

To solve problem (\ref{Pr_1}) {numerically}, we, {first of all, may try to} apply the ADM. We approximate the solution by the  $m-$th partial sum
$\overset{m}{u_{A}}$ of the series
$\sum\limits_{i=0}^{\infty}u^{(i)}_{A}\left(x\right),$ where
$u^{(i)}_{A}\left(x\right)$ can be found from the {sequence} of Cauchy problems
$$\frac{d}{d x}u^{(0)}_{A}\left(x\right)=-u^{(0)}_{A}\left(x\right)+\cos\left(x\right)+\sin\left(x\right)+\sin^{3}\left(x\right),\; u^{(0)}_{A}\left(0\right)=0,$$
$$\frac{d}{d x}u^{(i)}_{A}\left(x\right)=-\left.\frac{d}{d t}t\left(\sum_{i=0}^{\infty}t^{i}u^{(i)}_{A}\left(x\right)\right)^{3}\right|_{t=0}-u^{(i)}_{A}\left(x\right),\;u^{(i)}_{A}\left(0\right)=0,\;i=1,2,\ldots.$$
The results are presented on the Fig. 1.
\begin{figure}[h!]
\begin{minipage}[h]{1\linewidth}
\begin{minipage}[h]{0.48\linewidth}
\center{\rotatebox{0}{\includegraphics[height=1\linewidth,
width=1.3\linewidth]{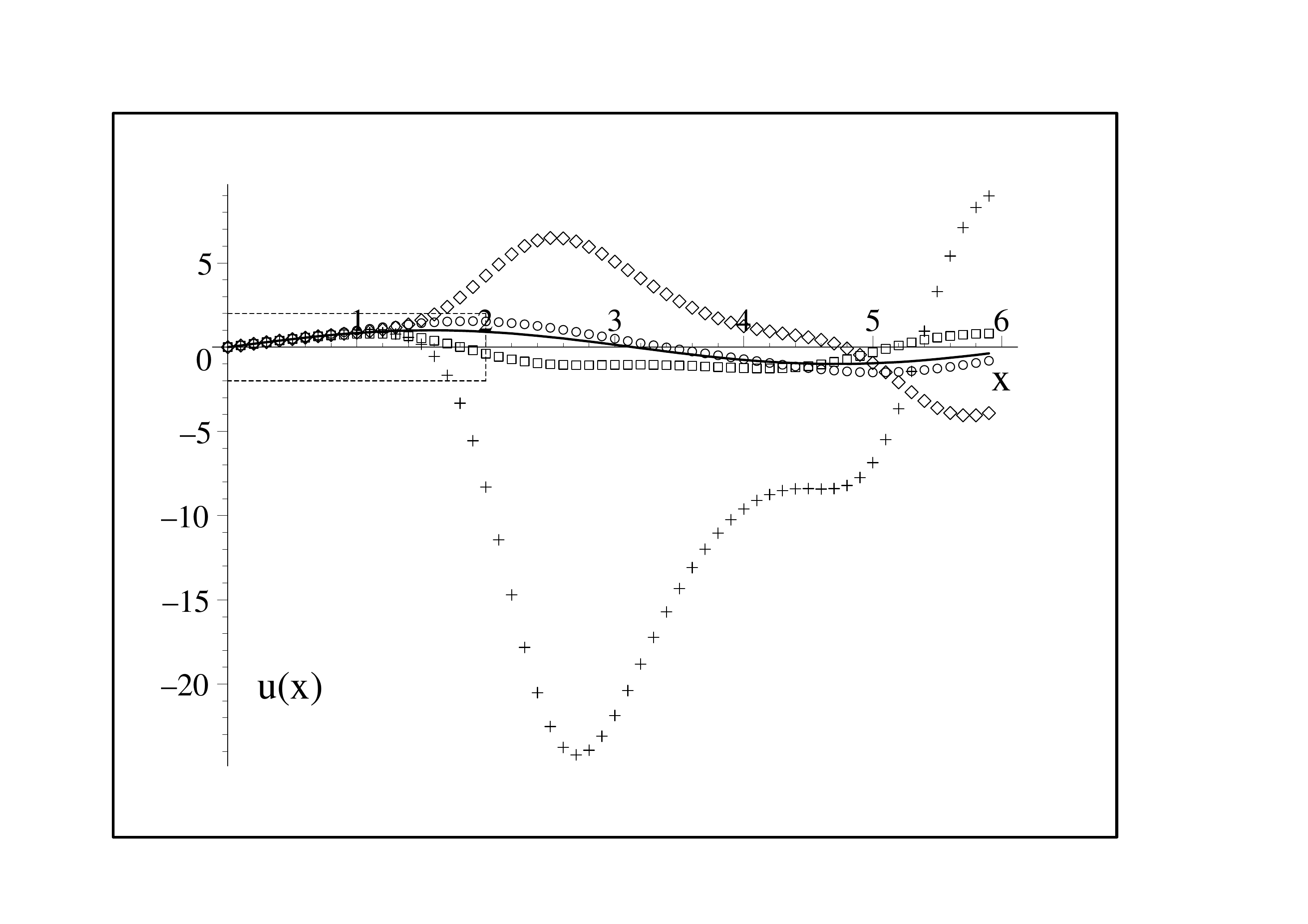}} \\ a) }
\end{minipage}
\hfill\label{image1}
\begin{minipage}[h]{0.48\linewidth}
\center{\rotatebox{0}{\includegraphics[height=1\linewidth,
width=1.3\linewidth]{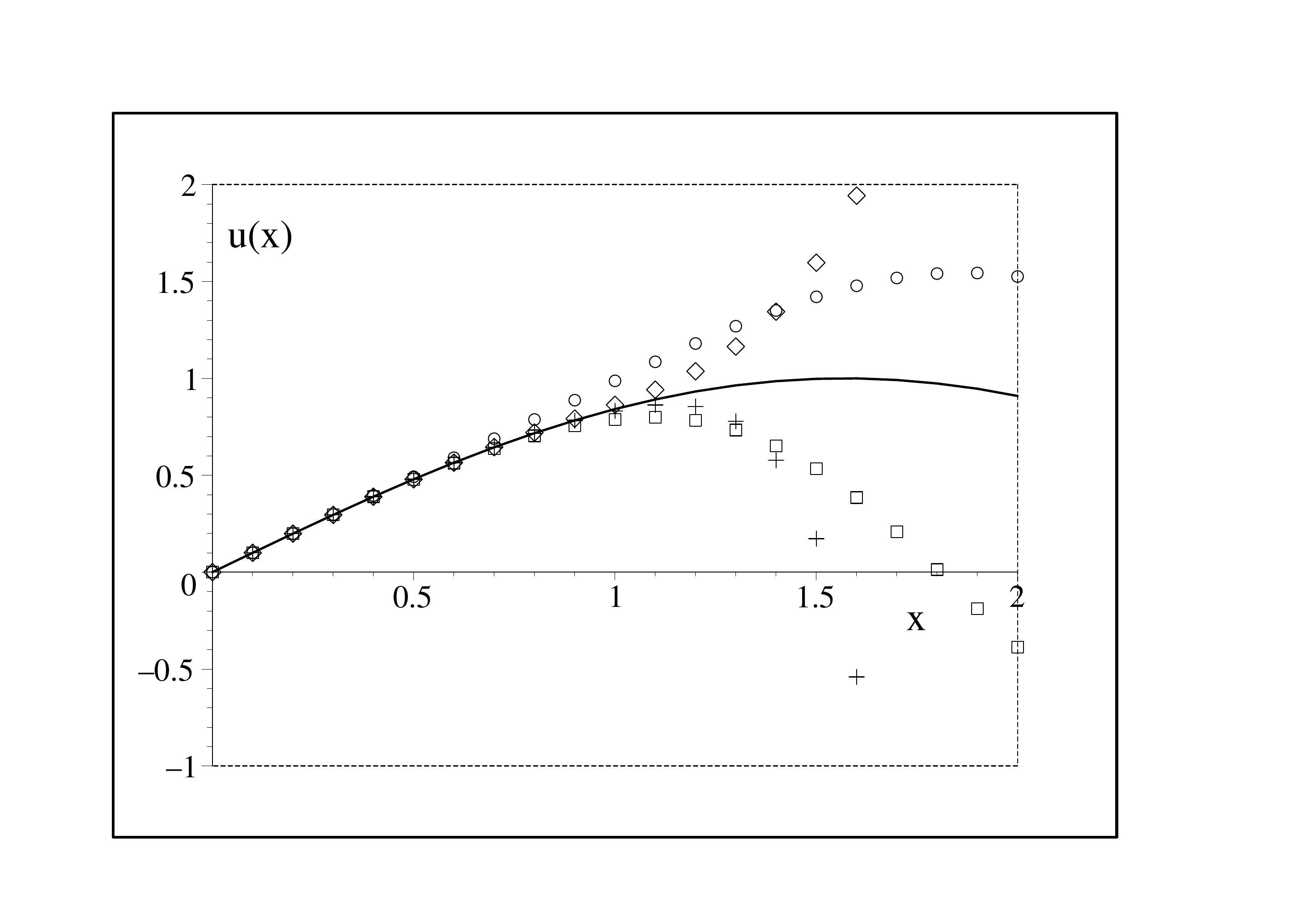}} b)}
\end{minipage}
\caption{Example 1. ADM, {\it continuous line:}
$\;u^{\ast}\left(x\right)=\sin\left(x\right);$ $\;\circ:\;
\overset{0}{u}_{A}\left(x\right);$ $\;\Box:\;
\overset{1}{u}_{A}\left(x\right);$ $\;\lozenge:\;
\overset{2}{u}_{A}\left(x\right);\;$ $\;+:\;
\overset{3}{u}_{A}\left(x\right);$}
\end{minipage}
\end{figure}
The graphs on Fig. 1 show that the ADM for the Cauchy problem (\ref{Pr_1}) is divergent on $\left[0,\; 6\right]$ .

The application of FD-method  to problem (\ref{Pr_1}) with the grid
$$\omega=\left\{x_{0}=0,\;x_{i}=\frac{1}{3}i,\; i=1\ldots,
144\right\}$$ yields results which are presented on Fig.2, Fig.3. Here we use {the} notations:
$$\delta_{i}\left(x\right)=\sin\left(x\right)-\overset{i}{u}\left(x\right),\;
i=0,1,\ldots.$$

\begin{figure}[h!]
\begin{minipage}[h]{1\linewidth}
\begin{minipage}[h]{0.48\linewidth}
\center{\rotatebox{-0}{\includegraphics[height=1\linewidth,
width=1.3\linewidth]{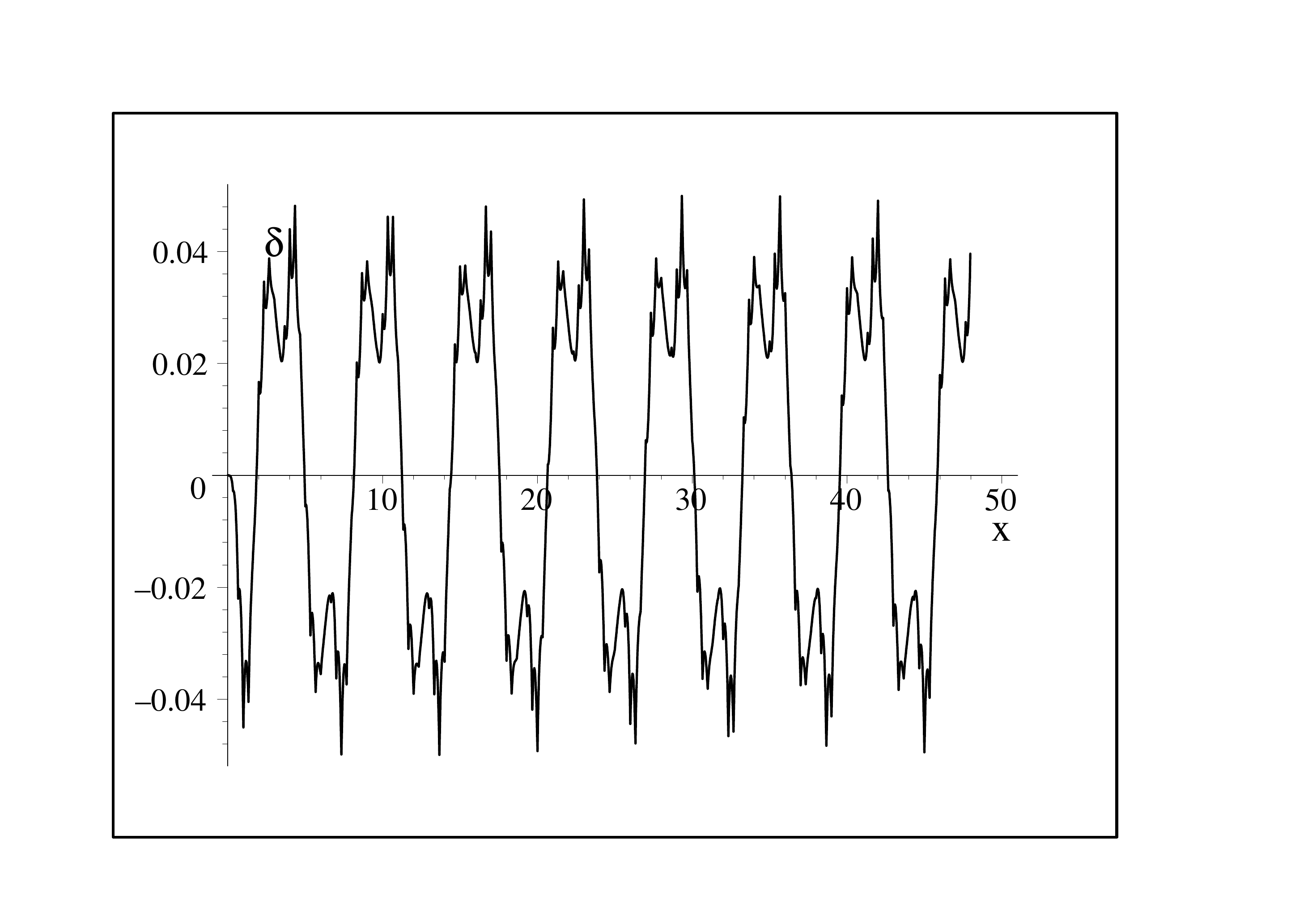}} \\ a) }
\end{minipage}
\hfill\label{image1}
\begin{minipage}[h]{0.48\linewidth}
\center{\rotatebox{-0}{\includegraphics[height=1\linewidth,
width=1.3\linewidth]{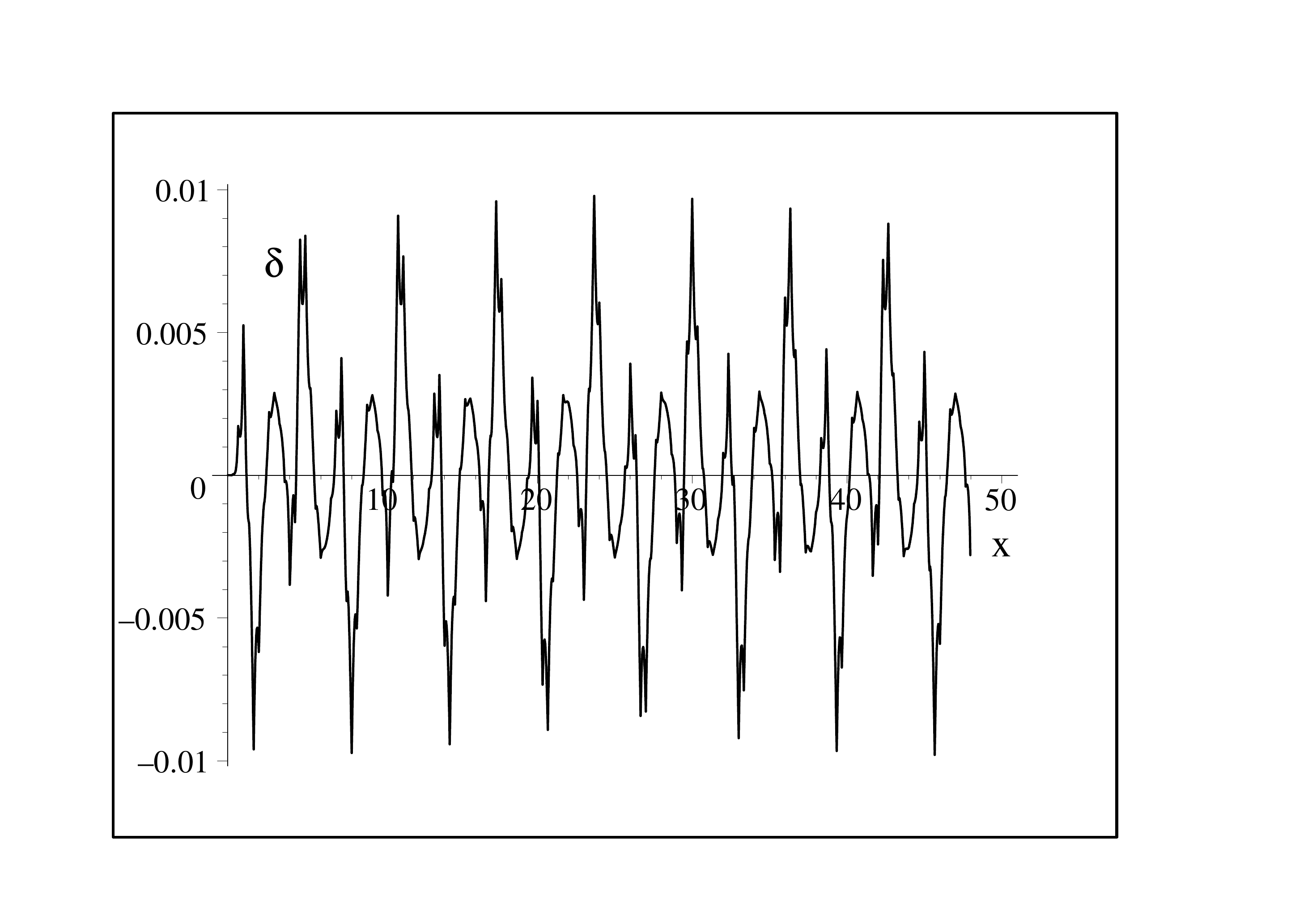}} \\ b)}
\end{minipage}
\caption{Example 1. FD-method. The graphs of absolute errors. a) --
$\;\delta_{0}\left(x\right);\;$ b) -- $\;\delta_{1}\left(x\right).$}\end{minipage}
\end{figure}

\begin{figure}[h!]
\begin{minipage}[h]{1\linewidth}
\begin{minipage}[h]{0.48\linewidth}
\center{\rotatebox{-0}{\includegraphics[height=1\linewidth,
width=1.3\linewidth]{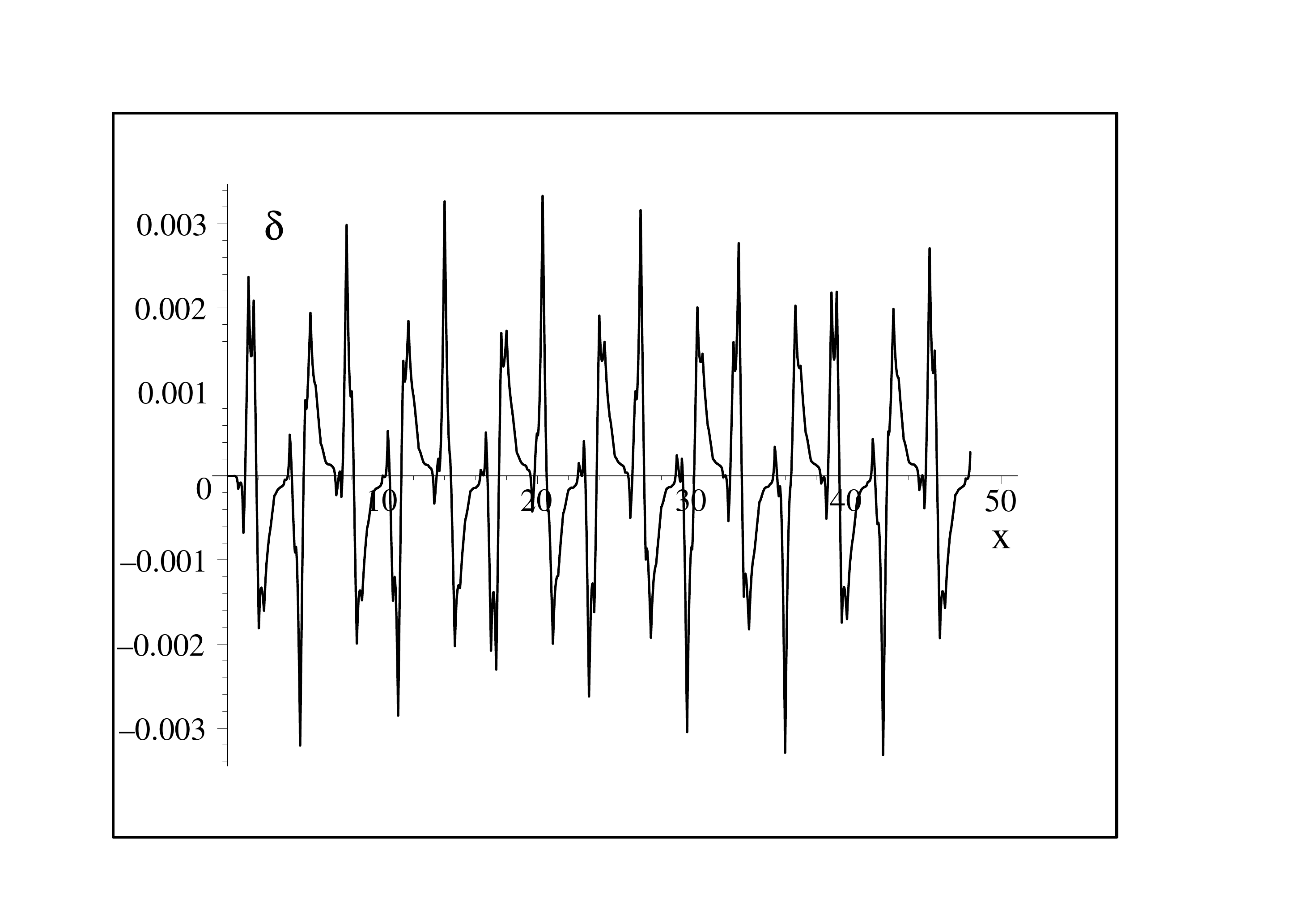}} \\ a)}
\end{minipage}
\hfill\label{image1}
\begin{minipage}[h]{0.48\linewidth}
\center{\rotatebox{-0}{\includegraphics[height=1\linewidth,
width=1.3\linewidth]{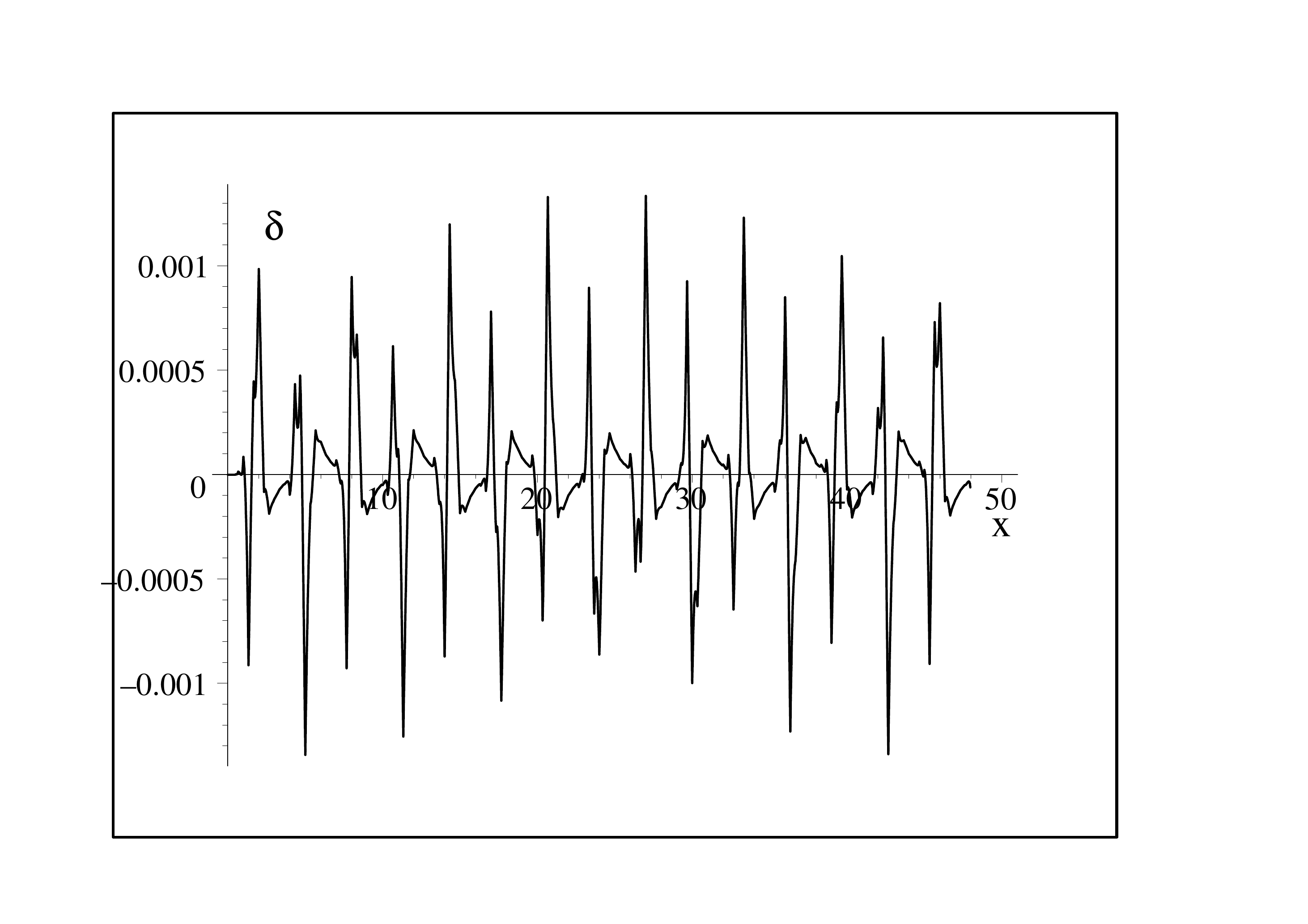}} \\ b)}
\end{minipage}
\caption{Example 1. FD-method. The graphs of absolute errors.
a) -- $\;\delta_{2}\left(x\right);\;$ b) --
$\;\delta_{3}\left(x\right);\;$$x\in\left[0,\;48\right].$}
\end{minipage}
\end{figure}

\begin{figure}[h!]
\begin{minipage}[h]{1\linewidth}
\begin{minipage}[h]{0.48\linewidth}
\center{\rotatebox{-0}{\includegraphics[height=1\linewidth,
width=1.3\linewidth]{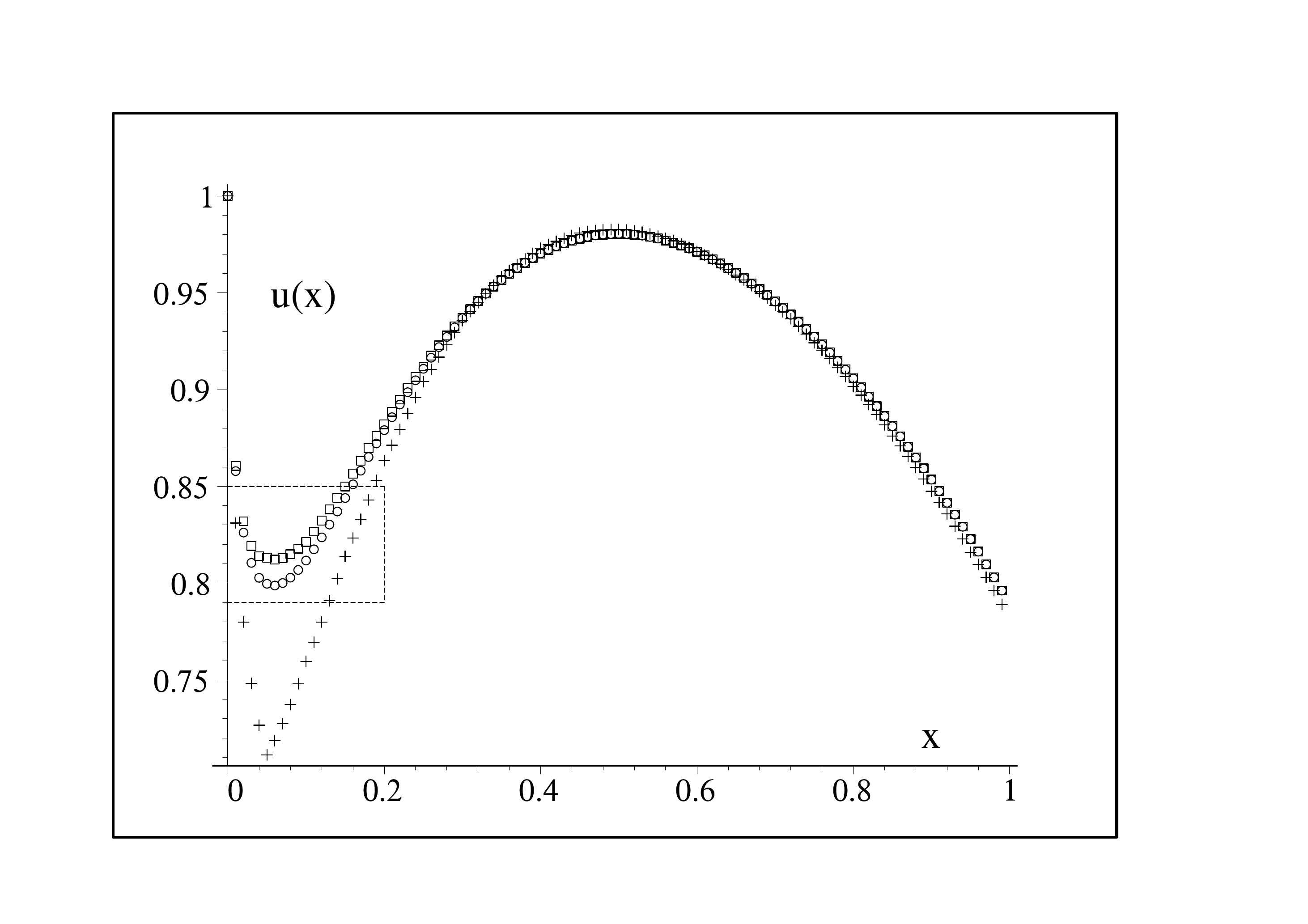}} \\  }
\end{minipage}
\hfill\label{image1}
\begin{minipage}[h]{0.48\linewidth}
\center{\rotatebox{-0}{\includegraphics[height=1\linewidth,
width=1.3\linewidth]{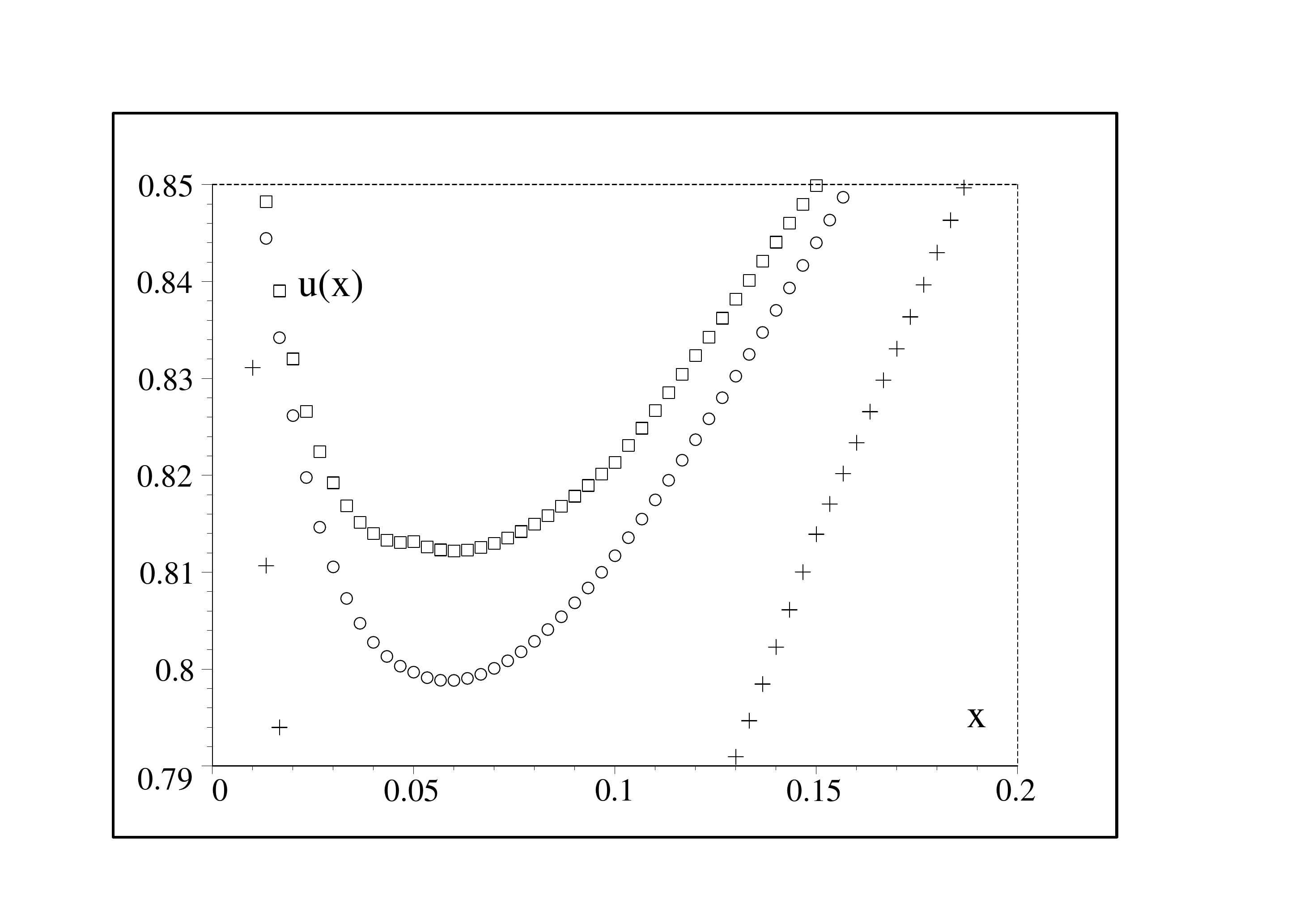}}}
\end{minipage}
\caption{Example 2. FD-method.
$+:\;\overset{0}{u}\left(x\right),\;\;\Box:\;
\overset{1}{u}\left(x\right),\;\;\circ:\;
\overset{2}{u}\left(x\right).$}
\end{minipage}
\end{figure}

\begin{figure}[h!]
\begin{minipage}[h]{1\linewidth}
\begin{minipage}[h]{0.48\linewidth}
\center{\rotatebox{-0}{\includegraphics[height=1\linewidth,
width=1.3\linewidth]{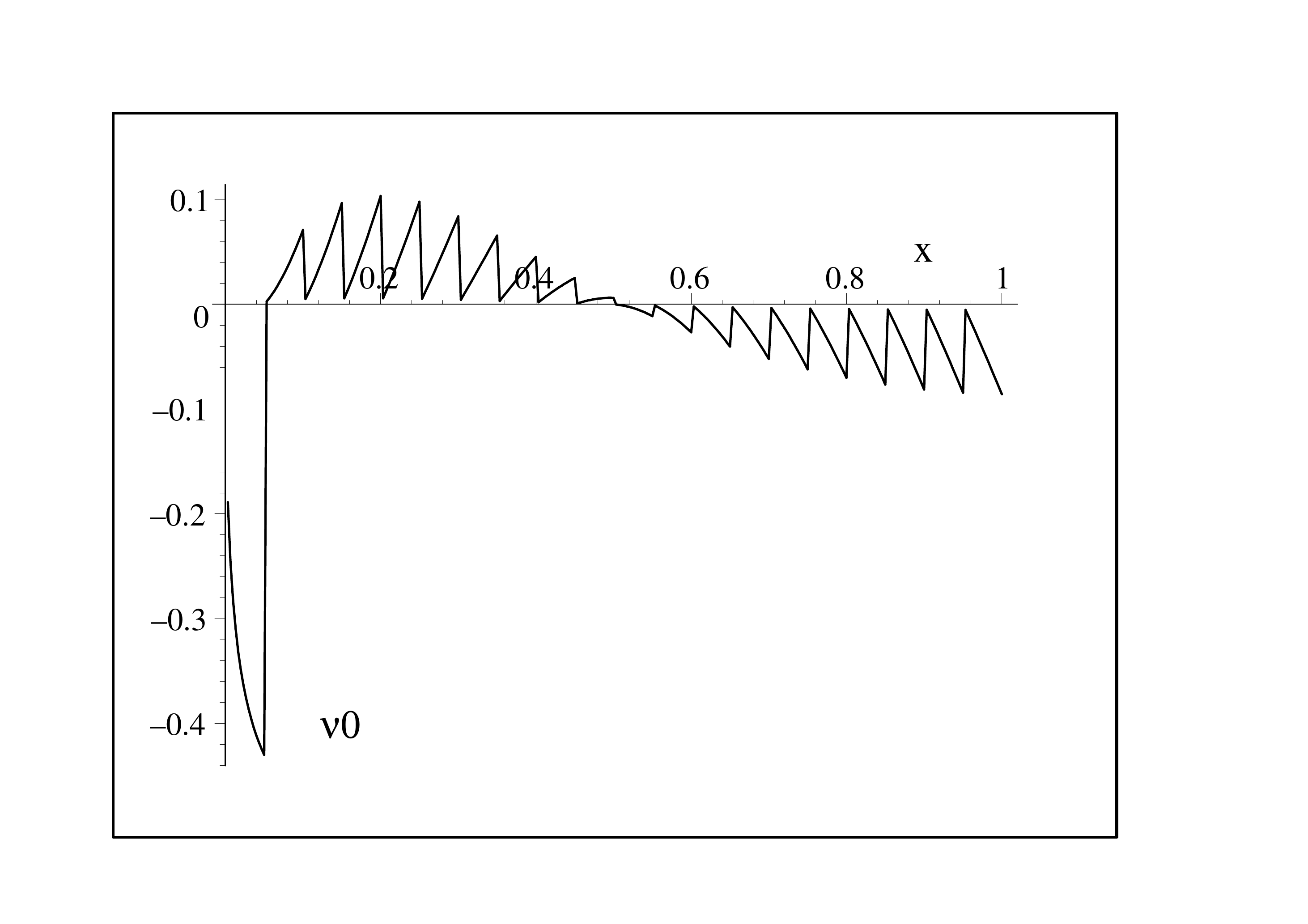}} \\ a)}
\end{minipage}
\hfill\label{image1}
\begin{minipage}[h]{0.48\linewidth}
\center{\rotatebox{-0}{\includegraphics[height=1\linewidth,
width=1.3\linewidth]{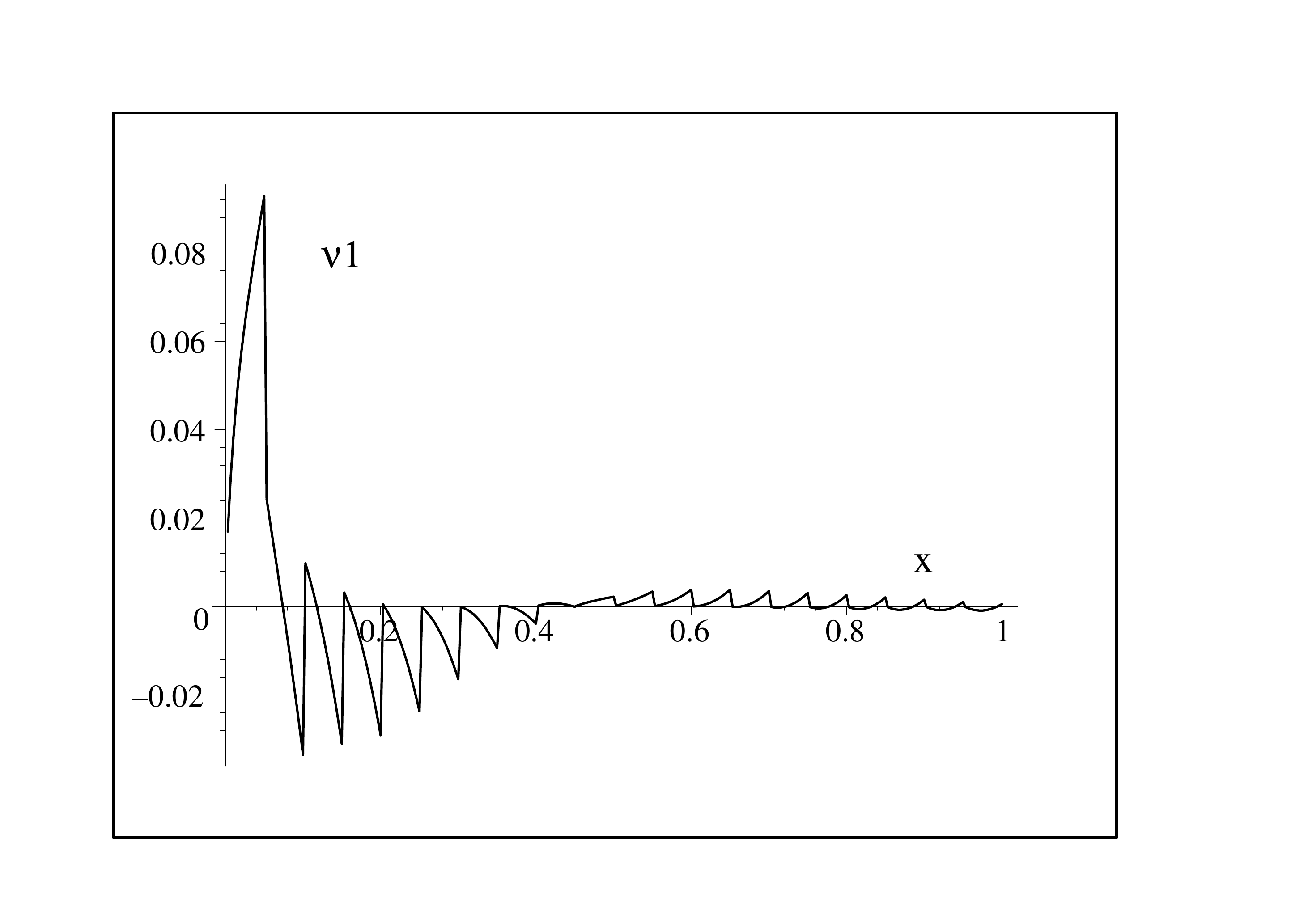}}\\ b)}
\end{minipage}
\caption{Example 2. FD-method. Graphs of discrepancy. \newline a) --
$\nu_{0}\left(x\right),\;\;$ b) -- $\nu_{1}\left(x\right).$}
\end{minipage}
\end{figure}

\begin{figure}[h!]
\begin{minipage}[h]{1\linewidth}
\begin{minipage}[h]{0.48\linewidth}
\center{\rotatebox{-0}{\includegraphics[height=1\linewidth,
width=1.3\linewidth]{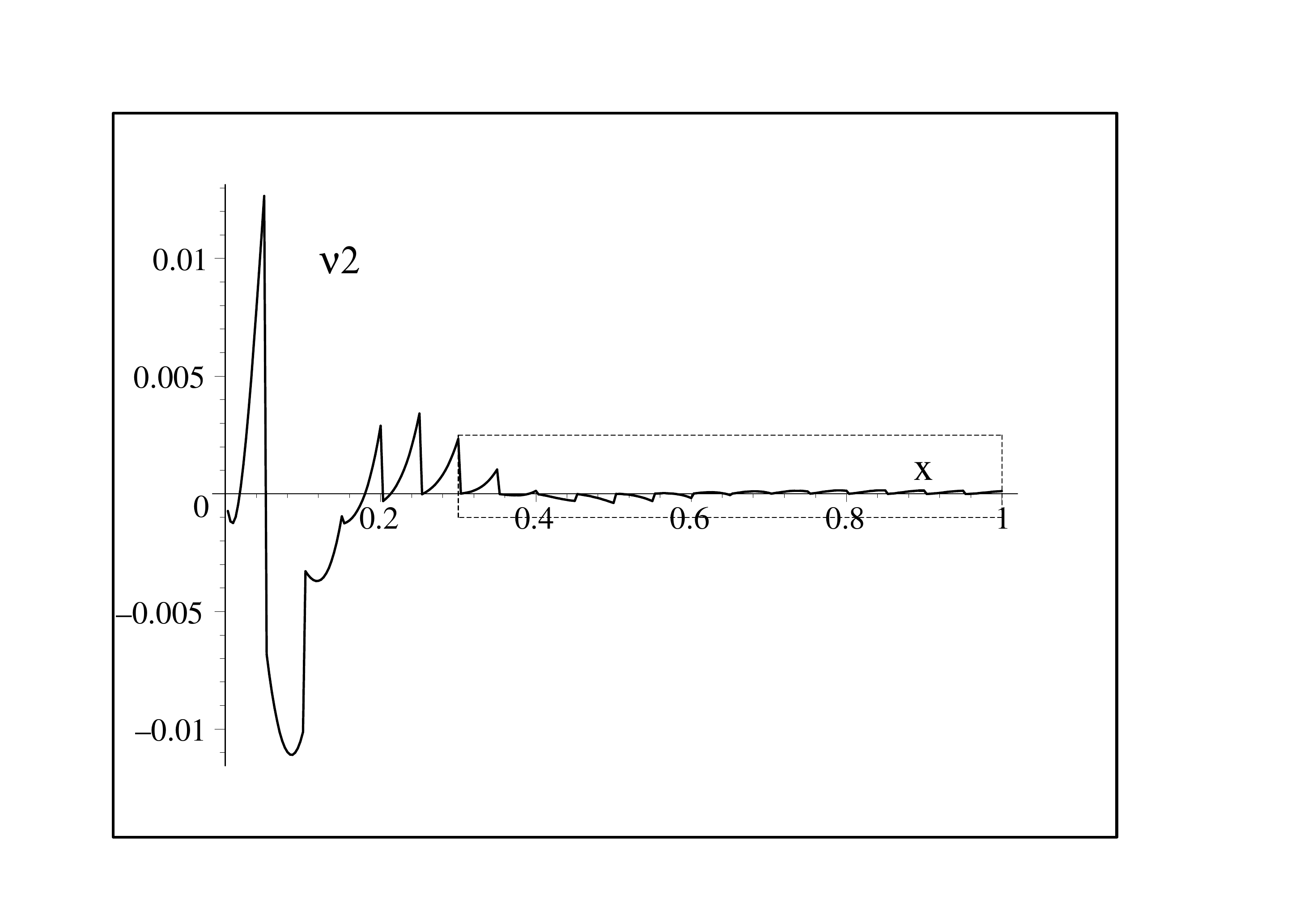}} \\  }
\end{minipage}
\hfill\label{image1}
\begin{minipage}[h]{0.48\linewidth}
\center{\rotatebox{-0}{\includegraphics[height=1\linewidth,
width=1.3\linewidth]{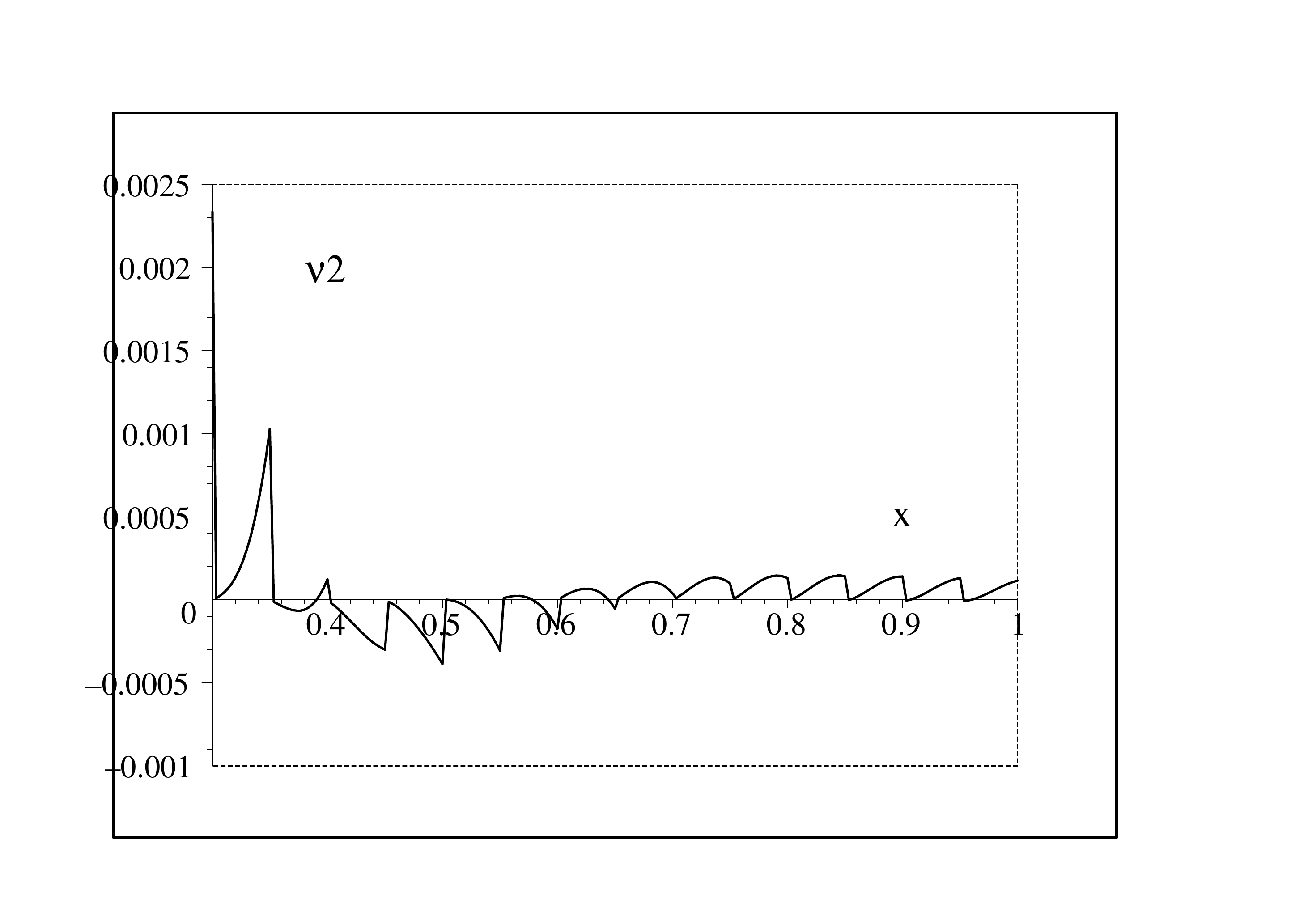}} }
\end{minipage}
\caption{Example 2. FD-method. Graphs of discrepancy.
$\nu_{2}\left(x\right).$}
\end{minipage}
\end{figure}

The graphs on Fig.~2 and Fig.~3 confirm the exponential convergence rate of series (\ref{ZagalniyRyad}) to the exact solution of problem
(\ref{Pr_1}).

{\bf Example 2.}

Let us consider the Cauchy problem
\begin{equation}\label{pr2_1}
    \frac{d}{d x}u\left(x\right)+\left(\frac{1}{\sqrt{x}}+1\right)u^{3}\left(x\right)=\left(\frac{1}{\sqrt{x}}+1\right)\sin\left(2\sqrt{x}+x\right),
\end{equation}
$$ x\in\left[0,\;1\right],\; u\left(0\right)=1.$$

Problem (\ref{pr2_1}) has {a} singularity at the point $x=0:$ $\lim\limits_{x\rightarrow 0}\frac{d}{d x}u\left(x\right)=+\infty.$ Using routines {from} the computer algebra system Maple 12, we {tried} to find the solution of problem (\ref{pr2_1}) either analytically or numerically, {but} all that was in vain. In the present version Maple is unable to {solve} such problems.  The conditions of theorem \ref{FD_Theor} are not fulfilled for this problem, either. Regardless of that, the ideas of  ADM and FD-method are naturally applicable to this problem. However, as it turned out, the ADM is divergent on $\left[0,\;1\right]$ {in this case.}

{To apply} the FD-method {we need to introduce} a grid on the segment $\left[0,\;1\right]$
$$\widehat{\omega}=\left\{0=x_{0}, x_{i}=0.05i,\; i=1,2,\ldots, 20\right\}.$$

The base problem is stated as follows
$$\frac{d}{d x}u^{(0)}\left(x\right)+\left(\frac{1}{\sqrt{x}}+1\right)\left(u^{(0)}\left(x_{i-1}\right)\right)^{2}u^{(0)}\left(x\right)=\left(\frac{1}{\sqrt{x}}+1\right)\sin\left(2\sqrt{x}+x\right), $$
$$x\in\left[x_{i-1},\;x_{i}\right]  u\left(0\right)=1.$$
It admits {the} analytical solution
$$u^{(0)}\left(x\right)=\exp\left(-\left.\left(2\sqrt{\xi}+\xi\right)\right|_{\xi=x_{i-1}}^{\xi=x}\left(u^{(0)}\left(x_{i-1}\right)\right)^{2}\right)u^{(0)}\left(x_{i-1}\right)+$$
$$+\int_{x_{i-1}}^{x}\left(\frac{1}{\sqrt{\xi}}+1\right)\exp\left(\left.\left(2\sqrt{\tau}+\tau\right)\right|^{\tau=\xi}_{\tau=x}\left(u^{(0)}\left(x_{i-1}\right)\right)^{2}\right)\sin\left(2\sqrt{\xi}+\xi\right) d \xi=$$
$$=\exp\left(-\left.\left(2\sqrt{\xi}+\xi\right)\right|_{\xi=x_{i-1}}^{\xi=x}\left(u^{(0)}\left(x_{i-1}\right)\right)^{2}\right)u^{(0)}\left(x_{i-1}\right)+$$
$$+\frac{\left(u^{(0)}\left(x_{i-1}\right)\right)^{2}}{\left(u^{(0)}\left(x_{i-1}\right)\right)^{4}+1}\left[\exp\left(-\left.\left(2\sqrt{\tau}+\tau\right)\right|_{\tau=\xi}^{\tau=x}\left(u^{(0)}\left(x_{i-1}\right)\right)^{2}\right)\right.\times$$
$$\times \left.\left.\left\{\sin\left(2\sqrt{\xi}+\xi\right)-\frac{1}{\left(u^{(0)}\left(x_{i-1}\right)\right)^{2}}\cos\left(2\sqrt{\xi}+\xi\right)\right\}\right|_{\xi=x_{i-1}}^{\xi=x}\right].$$
Similar analytical formulas were obtained for  $u^{(1)}\left(x\right)$ and $u^{(2)}\left(x\right).$ For error control we use the discrepancy
$$\nu_{n}\left(x\right)=\sqrt{x}\left(\frac{d}{d x}\overset{n}{u}\left(x\right)\right)+\left(1+\sqrt{x}\right)\left(\left(\overset{n}{u}\left(x\right)\right)^{3}-\sin\left(2\sqrt{x}+x\right)\right).$$
The results are presented on Fig.~4 -- Fig.~6.
As above we use the notation $\overset{m}{u}\left(x\right)=\sum\limits_{i=0}^{m}u^{(i)}\left(x\right).$
It is easy to seen that the discrepancy is decreasing exponentially even behind the point of singularity $x=0.$

\end{document}